\newtheorem{theorem}{Theorem}[section]
\newtheorem{proposition}[theorem]{Proposition}
\newtheorem{lemma}[theorem]{Lemma}
\theoremstyle{definition}
\newtheorem{definition}[theorem]{Definition}
\newtheorem{remark}[theorem]{Remark}
\newcommand{\F}{\mathcal{F}}
\newcommand{\ff}{\mathbb{F}}
\newcommand{\Hz}{\mathcal{H}}
\newcommand{\R}{\mathbf{R}}
\newcommand{\N}{\mathbf{N}}
\begin{document}

\title
{Asymptotic error distributions of
the Euler method for continuous-time nonlinear filtering}

\author{Teppei Ogihara\footnote{The Institute of Statistical Mathematics, 
The Institute of Statistical Mathematics, 
10-3 Midori-cho, Tachikawa, Tokyo 190-8562, Japan, 
Tel: +81-50-5533-8500
mail: ogihara@ism.ac.jp} 
~and 
Hideyuki Tanaka\footnote{Corresponding author, 
National Institute of Technology, Toba College, 
1-1 Ikegami-cho, Toba City, Mie,517-8501, Japan, 
Tel: +81-599-25-8000, 
mail: h-tanaka@toba-cmt.ac.jp}
}

\date{\today}

\maketitle

\begin{abstract}
In this paper, 
we deduce the asymptotic error distribution 
of the Euler method for the nonlinear filtering problem with continuous-time observations. 
As studied in previous works by several authors, 
the error structure of the method is characterized by 
conditional expectations of some functionals of multiple stochastic integrals. 
Our main result is to prove 
the stable convergence of a sequence of such conditional expectations 
by using the techniques of martingale limit theorems in the spirit of Jacod (1997). 

\begin{flushleft}
{\bf Keywords:} 
nonlinear filtering; Euler method; stable convergence; martingale limit theorem
\end{flushleft}

\begin{flushleft}
{\bf Mathematical Subject Classification (2000): } 
60G35, 93E11, 60F05, 65C20 
\end{flushleft}
\end{abstract}


\section{Introduction}\label{sec:introduction}

The functional central limit theorem for sequences of stochastic processes on path spaces 
has been developed with many applications. 
In the semimartingale setting, Jacod \cite{J97} established useful martingale limit theorems with mixed normal limits. 
They enabled us to study non-ergodic limits of statistics. 
In particular, they have been applied to statistical inference of diffusion processes observed at a fixed time interval. 
Among many studies, we refer the readers Gobet \cite{gob01} for diffusion processes with equi-distance observations, 
Jacod et al. \cite{jac-etal09} for pre-averaged estimators for volatility with noisy observations, 
and Ogihara and Yoshida \cite{ogi-yos14} for diffusion processes with nonsynchronous observations. 
When we consider these models of diffusions, statistics usually have non-ergodic limits, 
and martingale limit theorems in Jacod \cite{J97} are useful to deduce the asymptotic distributions. 
While there are many applications in statistical inference, 
the theorems are general enough to apply other problems related to semimartingales. 
We will apply the theorems to the Euler method for the nonlinear filtering problem. 

The purpose of this paper is to determine the asymptotic distribution of 
conditional expectations of some (Wiener) functionals and its application to 
the discrete-time approximation for nonlinear filtering with continuous-time observations. 
The key result is that, roughly speaking, for suitable (possibly multi-dimensional) 
$F$, $(\theta_t)_{t \in [0,1]}$, and a Brownian motion $(W_t)_{t \in [0,1]}$, 
\begin{align}\label{eq:target}
\sqrt{n} E\Big[F \int_0^1\int_{[sn]/n}^s \theta_r dW_r dW_s \Big| (W_t)_{t \in [0,1]}\Big]
\end{align}
converges stably in law to a mixed normal random variable; 
see Definition \ref{def:stable} and the precise formulation in Theorem \ref{thm:limittheorem}. 
This is a non-trivial task in the field of martingale limit theorem. 
From the reason why the interchange of the stable limit and conditional expectation operations is not allowed in some cases, 
a result in Jacod and Protter \cite{JP}, which gives the stable limit of $\sqrt{n}\int_0^1\int_{[sn]/n}^s \theta_r dW_r dW_s$, 
is not enough to show the limit theorem for (\ref{eq:target}). 
We will prove the desired limit theorem using some approximation techniques 
including the method of proof in \cite{JP}. 

Our goal in the rest of this paper is to deduce the asymptotic error distribution of the Euler method 
for nonlinear filtering (described in Theorem \ref{thm:app1} and Theorem \ref{thm:app2}). 
The objective of filtering theory is the estimation of an unobservable process $X=(X_t)_{t\in[0,1]}$ under the observation of 
$Y=(Y_t)_{t\in[0,1]}$. 
We assume $Y$ solves the following stochastic differential equation driven by a Brownian motion $(W_t)_{t\in[0,1]}$:
\[
Y_t = \int_0^t h(X_s, Y_s)ds + W_t.
\]
For the application in many fields, we want to know the conditional distribution of $X_1$ 
given the $\sigma$-field generated by $(Y_t)_{t \in [0,1]}$ (see e.g.\ \cite{BC09}, \cite{K11}, \cite{LS77}). 
We will assume later that 
$X$ is a stochastic process depending on $Y$ and another noise $B=(B_t)_{t\in[0,1]}$, 
in particular, $X$ solves a stochastic differential equation driven by 
$(B_t, Y_t)_{t\in[0,1]}$ and whose coefficients depend on $X_t$ and $Y_t$. 
To obtain an approximate value of the conditional expectation $\pi_1(g) := E[g(X_1)|(Y_t)_{t \in [0,1]}]$, 
one can use an approximation $\bar{\pi}_1^n(g)$ constructed by the Euler method (with the regular partition) for it. 
$L^p$-error bounds for $\pi_1(g) - \bar{\pi}_1^n(g)$ were obtained in several papers. 
In general, one can obtain its rate of convergence as follows (see Proposition \ref{prop:lpestimate2}): 
\begin{align}\label{eq:order_1/2}
\|\pi_1(g) - \bar{\pi}_1^n(g)\|_p \leq \frac{C}{\sqrt{n}}. 
\end{align}
In specific cases, the rate of convergence is known to be $1/n$ as $n \rightarrow \infty$; 
see e.g.~Picard \cite{P84}, Talay \cite{Ta86}, Milstein and Tretyakov \cite{MT09}: 
\begin{align}\label{eq:order1}
\|\pi_1(g) - \bar{\pi}_1^n(g)\|_p \leq \frac{C}{n}. 
\end{align}
We are interested in the asymptotic distribution of the error $\pi_1(g) - \bar{\pi}_1^n(g)$. 
Our objective is to apply the limit theorem for (\ref{eq:target}) 
to characterizing the asymptotic error distribution of the form 
\begin{align}\label{eq:finalerror_target}
\sqrt{n} (\pi_1(g) - \bar{\pi}_1^n(g)).
\end{align}
Of course, this value converges to zero if the estimate (\ref{eq:order1}) is satisfied.
However, in general, this converges to a non-zero random variable in the sense of stable convergence in law. 
In fact, if $\bar{\pi}_1^n(g)$ is given by the Euler-type approximation as in Section \ref{sec:applications} of the present paper, 
the sequence (\ref{eq:finalerror_target}) converges stably in law toward a mixed normal distribution 
whose conditional variance is obtained explicitly. 
The use of stable convergence is explained as follows. 
The error can be decomposed, in Theorem \ref{thm:app1} and Theorem \ref{thm:app2}, as 
\[
\sqrt{n} (\pi_1(g) - \bar{\pi}_1^n(g)) = a_n Z_n + a'_n Z'_n
\]
where the random variables $a_n$, $a'_n$, $Z_n$ and $Z'_n$ satisfy the following conditions (A) and (B):
\begin{description}
\item[(A)] Both $Z_n$ and $Z'_n$ can be represented by conditional expectations of some functionals (such as (\ref{eq:target})). 
Moreover $(Z_n,Z'_n)$ converges stably in law to some random variable $(Z_\infty, Z'_\infty)$. 
\item[(B)] $a_n \rightarrow a_\infty$ and $a'_n \rightarrow a'_\infty$ in probability 
for some random variables $a_\infty$ and $a_\infty'$.
\end{description}
Then $a_n Z_n + a'_n Z'_n$ still converges stably in law, to $a_\infty Z_\infty+a'_\infty Z'_\infty$. 
Note that if convergence in (A) is weaker (e.g., convergence in law), 
then the limit distribution of $a_n Z_n + a'_n Z'_n$ is no longer obtained. 
To complete the proofs, the most important consideration is 
to understand how the limit theorem for (\ref{eq:target}) can be applied to the results of type (A). 
For the purpose, we apply several techniques seen in Jacod and Protter \cite{JP} and Clement et al.~\cite{CKL06}. 

The article is organized as follows. 
In Section \ref{sec:pre}, we give a brief review of stable convergence 
and its fundamental properties. 
Section \ref{sec:limittheorem} contains the statements and proofs of our main results 
in terms of stable convergence of conditional law. 
The main tool of the derivation is maritingale limit theorems in \cite{J97}. 
In Section \ref{sec:applications}, an application of the limit theorems to 
the Euler method for continuous-time nonlinear filtering is discussed. 
We rely on the ``change of measure'' approach via Girsanov's theorem 
in order to construct numerical schemes. 
First, we deal with a simple case where 
the Euler method is only applied to the Girsanov's density. 
Second, we tackle a more complicated problem 
in the Euler method for both $(X_t)_{t \in [0,1]}$ and the Girsanov's density. 

\section{Preliminaries}\label{sec:pre}
\subsection{Stable convergence in law}
Let $(\Omega,\F,P)$ be a complete probability space and $E$ be a metric space. 
The space $b\mathcal{G}$ denotes the set of all bounded $\mathcal{G}$-measurable real-valued functions for a given 
sub $\sigma$-field $\mathcal{G} \subset \F$. 
Let us consider $E$-valued random variables $(X_n)_{n\in\N}$ and $X$. 
We denote several types of convergence
by the following notation. 
\begin{description}
\item[(i)] $X_n \rightarrow^P X$ : Convergence in probability (under the probability measure $P$).
\item[(ii)] $X_n = o_P(1)$ is defined as $X_n \rightarrow^P 0$.
\item[(iii)] $X_n \Longrightarrow^{\mathcal{L}} X$ : Convergence in law (weak convergence).
\item[(iv)] $X_n \Longrightarrow^{s-\mathcal{G}} X$ : $\mathcal{G}$-stable convergence in law with $\mathcal{G} \subset \F$.
\end{description}

We now define precisely $\mathcal{G}$-stable convergence in law below. 

\begin{definition}\label{def:stable}
Let $\mathcal{G}$ be a sub $\sigma$-field of $\F$. 
For $E$-valued random variables $(X_n)_{n\in\N}$,  
we say $(X_n)_{n\in\N}$ converges $\mathcal{G}$-stably to $X$ 
(i.e.\ $X_n \Longrightarrow^{s-\mathcal{G}} X$) 
if $X$ is an $E$-valued random variable on 
an extension $(\hat{\Omega},\hat{\F}, \hat{P})$ of $(\Omega, \F, P)$ 
and 
\begin{equation}\label{eq:stable}
\lim_{n\rightarrow\infty}E[f(X_n)Y] = \hat{E}[f(X)Y]
\quad \textrm{ for all } f \in C_b(E), Y \in b\mathcal{G}.
\end{equation}
\end{definition}

The extension probability space $(\hat{\Omega},\hat{\F}, \hat{P})$ is assumed that 
$(\hat{\Omega},\hat{\F}) = (\Omega, \F)\times (\Omega_0, \F_0)$ and 
$\hat{P}(A \times \Omega_0) = P(A), A \in \F$ for some measurable space $(\Omega_0, \F_0)$. 
See Aldous and Eagleson \cite{ald-eag78} or Jacod and Shiryaev \cite{JS2} for fundamental properties of stable convergence. 
For the readers, we state some well-known facts.  
\begin{itemize}
\item If $X_n \Longrightarrow^{s-\F} X$ and $Y_n \rightarrow^P Y$, then $(X_n,Y_n) \Longrightarrow^{s-\F} (X,Y)$.
\item Let $X$ be defined on the original space $(\Omega, \F, P)$. Then 
$X_n \Longrightarrow^{s-\F} X$ if and only if $X_n \rightarrow^P X$. 
\item Recall that the limit (\ref{eq:stable}) still holds for any $Y \in L^1(\mathcal{G})$. 
This yields that the $\F$-stable convergence remains for any absolutely continuous measure $Q$ with respect to $P$. 
In fact, we can take an extension $(\hat{\Omega},\hat{\F}, \hat{Q})$ with $d\hat{Q} = \frac{dQ}{dP}d\hat{P}$. 
\item A version of the Portmanteau Theorem (see e.g.~\cite{B99}) below holds. 
\end{itemize}

\begin{lemma}\label{lem:Portmanteau}
Fix an extended probability space $(\hat{\Omega},\hat{\F}, \hat{P})$ and a sub $\sigma$-field $\mathcal{G} \subset \F$. 
Then the following conditions are equivalent. 
\begin{itemize}
\item[{\rm(1)}] $\displaystyle \lim_{n\rightarrow\infty}E[f(X_n)Y] = \hat{E}[f(X)Y] 
\textrm{ for all } f \in C_b(E)$ and $Y \in b\mathcal{G}$.
\item[\rm{(2)}] $\displaystyle \lim_{n\rightarrow\infty}E[f(X_n)Y] = \hat{E}[f(X)Y] 
\textrm{ for all bounded uniformly continuous function } f$ and $Y \in b\mathcal{G}$.
\item[\rm{(3)}] $\displaystyle \limsup_{n\rightarrow\infty}E[1_{X_n \in F}Y] \leq \hat{E}[1_{X\in F}Y] 
\textrm{ for all closed set } F$ and $Y \in b\mathcal{G}$ with $Y \geq 0$.
\item[\rm{(4)}] $\displaystyle \hat{E}[1_{X\in G}Y]  \leq \liminf_{n\rightarrow\infty}E[1_{X_n \in G}Y]
\textrm{ for all open set } G$ and $Y \in b\mathcal{G}$ with $Y \geq 0$.
\end{itemize}
\end{lemma}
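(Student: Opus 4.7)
The plan is to observe that, for $Y \in b\mathcal{G}$ with $Y \geq 0$, the prescription $\mu_n^Y(A) := E[1_{\{X_n \in A\}} Y]$ defines a finite Borel measure on $E$ with total mass $E[Y]$, and similarly $\mu^Y(A) := \hat{E}[1_{\{X \in A\}} Y]$ defines one with total mass $\hat{E}[Y]$. Since the extension satisfies $\hat{P}(A \times \Omega_0) = P(A)$, one has $\hat{E}[Y] = E[Y]$ for every $Y \in b\mathcal{G} \subset b\F$, so the two total masses agree and conditions (1)--(4) read exactly as the classical Portmanteau conditions for the weak convergence $\mu_n^Y \to \mu^Y$. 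I would therefore argue along the standard cyclic scheme $(1) \Rightarrow (2) \Rightarrow (3) \Leftrightarrow (4) \Rightarrow (1)$.

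The implication $(1) \Rightarrow (2)$ is immediate, since every bounded uniformly continuous function lies in $C_b(E)$. For $(2) \Rightarrow (3)$, given a closed set $F$, I would use the standard approximation $f_k(x) := (1 - k\, d(x,F))^+$, which is bounded, uniformly continuous, dominates $1_F$, and decreases pointwise to $1_F$. Since $Y \geq 0$, one has $E[1_{\{X_n \in F\}} Y] \leq E[f_k(X_n) Y]$; hypothesis (2) applied to $f_k$ and then dominated convergence in $k$ on the right-hand side gives (3). The equivalence $(3) \Leftrightarrow (4)$ follows by writing $1_{\{X_n \in G\}} = 1 - 1_{\{X_n \in G^c\}}$ for an open $G$ and subtracting the closed-set inequality from the common total mass $E[Y] = \hat{E}[Y]$, which turns limsups into liminfs and interchanges $F$ with $G$.

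For $(4) \Rightarrow (1)$, I would reduce to $Y \geq 0$ via the decomposition $Y = Y^+ - Y^-$, and to $0 \leq f \leq M$ via an additive constant, and then invoke the layer-cake identity
\[
E[f(X_n) Y] = \int_0^M E[1_{\{f(X_n) > t\}} Y]\, dt.
\]
The set $\{f > t\}$ is open by continuity of $f$, so (4) and Fatou's lemma yield $\liminf_n E[f(X_n) Y] \geq \hat{E}[f(X) Y]$; running the same argument with $M - f$ in place of $f$ gives the reverse inequality, and (1) follows.

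There is no real obstacle: the argument is a direct transcription of the classical Portmanteau theorem with the weight $Y \in b\mathcal{G}$ carried through every step. The only point that requires any care is the identification $\hat{E}[Y] = E[Y]$ used in $(3) \Leftrightarrow (4)$, but this is an immediate consequence of the assumed product structure of the extension.
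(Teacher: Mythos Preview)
Your argument is correct and follows the classical Portmanteau scheme. Note, however, that the paper does not actually give a proof of this lemma: it is stated as a known fact with references to \cite[Proposition 5.33]{JS2} and \cite{B99}, so there is nothing to compare against beyond observing that your proof is precisely the standard one found in those sources, adapted to carry the weight $Y \in b\mathcal{G}$ throughout.
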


\begin{proof}
It is possible to show (1) $\Rightarrow$ (2) $\Rightarrow$ (3) $\Leftrightarrow$ (4) $\Rightarrow$ (1) 
as shown in the case of convergence in law. 
\end{proof}

We prove here an auxiliary lemma applied to limit theorems in the next section. 
\begin{lemma}\label{lem:sublimit}
Let $\mathcal{G}$ be a sub $\sigma$-field of $\F$ and 
$(X_n)_{n \in \N}$ be $E$-valued $\F$-measurable random variables. 
Assume that, there exist an extension $(\hat{\Omega}, \hat{\F}, \hat{P})$ of original probability space, 
$E$-valued $\F$-measurable random variables $(X_n^m)_{n,m \in \N}$, and 
$E$-valued $\hat{\F}$-measurable random variables 
$(X^m)_{m \in \N}, X$
such that the following conditions hold. 
\begin{itemize}
\item[{\rm (a)}] For any $\varepsilon >0$,  $\displaystyle \lim_{m\rightarrow \infty} \limsup_{n\rightarrow\infty} P( |X_n - X_n^m| > \varepsilon ) = 0$.
\item[{\rm (b)}] $\displaystyle \lim_{n \rightarrow \infty}E[f(X_n^m)Y] = \hat{E}[f(X^m)Y] $ for each $m \in \N$, $f \in C_b(E)$ and $Y \in b\mathcal{G}$. 
\item[{\rm (c)}] For any $\varepsilon >0$,  $\displaystyle \lim_{m\rightarrow \infty} \hat{P}( |X^m - X| > \varepsilon ) = 0$.
\end{itemize}
Then $X_n \Longrightarrow^{s-\mathcal{G}} X$. 
\end{lemma}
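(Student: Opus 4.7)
The plan is to verify condition (2) of Lemma \ref{lem:Portmanteau}, that is, to show $E[f(X_n)Y] \to \hat{E}[f(X)Y]$ for every bounded uniformly continuous $f$ on $E$ and every $Y \in b\mathcal{G}$. Once (2) is established, stable convergence follows from the equivalence in Lemma \ref{lem:Portmanteau}.

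First I would fix such $f$ and $Y$, together with $K \geq \|Y\|_\infty$ and $M \geq \|f\|_\infty$, and write the triangle-type decomposition
\[
E[f(X_n)Y] - \hat{E}[f(X)Y] = \bigl(E[f(X_n)Y] - E[f(X_n^m)Y]\bigr) + \bigl(E[f(X_n^m)Y] - \hat{E}[f(X^m)Y]\bigr) + \bigl(\hat{E}[f(X^m)Y] - \hat{E}[f(X)Y]\bigr).
\]
The middle term tends to $0$ as $n \to \infty$ for each fixed $m$ by hypothesis (b). It therefore remains to show that the outer terms are negligible when $m$ is taken large (uniformly in $n$ for the first, and unconditionally for the third).

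For the first outer term, I would use the uniform continuity of $f$: given $\varepsilon>0$, choose $\delta>0$ so that $|x-y|<\delta$ implies $|f(x)-f(y)|<\varepsilon$, and split according to whether $|X_n - X_n^m|<\delta$ or not, yielding
\[
\bigl|E[f(X_n)Y] - E[f(X_n^m)Y]\bigr| \leq K\varepsilon + 2KM\, P(|X_n - X_n^m| > \delta).
\]
By hypothesis (a), $\limsup_n P(|X_n - X_n^m|>\delta) \to 0$ as $m\to\infty$, so the $\limsup_n$ of the first outer term tends to $0$ as $m\to\infty$. For the third outer term, hypothesis (c) gives $X^m \to X$ in $\hat{P}$-probability, hence $f(X^m) \to f(X)$ in probability by continuity of $f$, and the bounded convergence theorem (under the dominating bound $KM$) gives $\hat{E}[f(X^m)Y] \to \hat{E}[f(X)Y]$ as $m\to\infty$.

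Combining these, for any $\varepsilon>0$ I can first pick $m$ large so that both outer terms contribute at most $\varepsilon$ (the first in $\limsup_n$ sense, the third absolutely), and then send $n\to\infty$ using (b) to dispose of the middle term. This yields $\limsup_n |E[f(X_n)Y] - \hat{E}[f(X)Y]| \leq C\varepsilon$ for a fixed constant $C$ depending on $K,M$, proving the required convergence. There is no real obstacle here; the argument is a standard interchange-of-limits diagonal argument, and the only mild subtlety is making sure $f$ is taken uniformly continuous (so that the approximation in (a) transfers to $f(X_n) \approx f(X_n^m)$ without any tightness hypothesis on $X_n$).
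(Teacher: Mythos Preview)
Your proposal is correct and follows essentially the same approach as the paper: both verify condition (2) of Lemma~\ref{lem:Portmanteau} via the same three-term triangle decomposition, handle the middle term by hypothesis (b), and control the outer terms using the uniform continuity of $f$ together with (a) and (c). The only cosmetic difference is that you spell out the bounded convergence argument for the third term a bit more explicitly than the paper does.
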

\begin{proof}
We shall prove the condition (2) in Lemma \ref{lem:Portmanteau}. 
Let $f$ be a bounded uniformly continuous function and $Y \in b\mathcal{G}$. First we have 
\begin{align*}
& |E[f(X_n)Y]-\hat{E}[f(X)Y]| 
\\ & \leq |E[f(X_n^m)Y]-E[f(X_n)Y]| + |E[f(X_n^m)Y]-\hat{E}[f(X^m)Y]|  + |\hat{E}[f(X^m)Y]-\hat{E}[f(X)Y]|.
\end{align*}
By the assumption (b), 
\begin{align*}
\limsup_{n\rightarrow \infty}|E[f(X_n)Y]-\hat{E}[f(X)Y]| \leq \limsup_{n\rightarrow\infty} |E[f(X_n^m)Y]-E[f(X_n)Y]| +  |\hat{E}[f(X^m)Y]-\hat{E}[f(X)Y]|. 
\end{align*}
Since $f$ is uniformly continuous, for a given $\varepsilon >0$ we can choose $\delta >0$ 
such that $|x-y| < \delta$ implies $|f(x)-f(y)|<\varepsilon$ for every $x,y \in E$. Hence we have 
\begin{align*}
|E[f(X_n^m)Y]-E[f(X_n)Y]|
& \leq E[|(f(X_n^m)-f(X_n))Y| (1_{|X_n^m-X_n|>\delta}+1_{|X_n^m-X_n|\leq \delta})]
\\ & \leq 2 \|f\|_\infty \|Y\|_\infty P( |X_n - X_n^m| > \delta) + \varepsilon \|Y\|_\infty.
\end{align*}
Since $\varepsilon$ and $m$ are arbitrary, this yields  
$\limsup_{n\rightarrow \infty}|E[f(X_n)Y]-\hat{E}[f(X)Y]| = 0$
by (a) and (c).
\end{proof}

\subsection{Some properties of stochastic integrals under conditional probability}
For a stochastic process $X=(X_t)_{t \in [0,1]}$ on $(\Omega, \F, P)$, 
we define the filtration generated by $X$ as 
\[
\F_t^X := \sigma(X_s: 0\leq s \leq t) \vee \{\textrm{all } P\textrm{-null sets} \}.
\]
Let $W=(W_t)_{t \in [0,1]}$ be a $d$-dimensional standard Brownian motion. 
We consider some filtrations $\ff = (\F_t)_{t\in[0,1]}$ so that $W$ becomes 
a $\ff$-Brownian motion for the applications to nonlinear filtering. 
For example, in Section \ref{sec:applications} we specify 
\begin{align*}
\F_t = \sigma(X_0) \vee \F_t^B \vee \F_t^W, 
\quad  \textrm{ or } \quad \F_t = \sigma(X_0) \vee \F_1^B \vee \F_t^W, 
\end{align*}
where 
$B$ is a standard Brownian motion independent of $W$ and 
$X_0$ is a initial random variable of a stochastic differential equation. 

We introduce a Fubini-type theorem for stochastic integrals and conditional expectations. 
We fix a filtration $\ff$ satisfying the usual conditions. 
\begin{lemma}[See e.g.\ \cite{BC09}]\label{lem:fubini-type}
Let $(W_t)_{t\in[0,1]}$ be a $d$-dimensional $\ff$-Brownian motion and 
$(f_t)_{t\in[0,1]}$ be a $\R^d$-valued predictable process with respect to 
$\ff$ with
$E[\int_0^1 |f_s|^2 ds] < \infty$.
Then
\begin{description}
\item[{\rm (i)}]
 $(E[f_t| \F_t^W])_{t\in[0,1]}$ has a predictable version and satisfies that for $s \leq 1$, 
\[
E[f_s|\F_s^W](\omega) = E[f_s|\F_1^W](\omega) \quad \mbox{ a.e.-} (s,\omega).
\]

\item[{\rm (ii)}] 
\[
E\Big[\int_0^1 f_s dW_s\Big|\F_1^W\Big] = \int_0^1 E[f_s| \F_1^W] dW_s = \int_0^1 E[f_s| \F_s^W] dW_s \quad \mbox{ a.s.}
\]
\end{description}
\end{lemma}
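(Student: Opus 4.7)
The plan is to prove (i) first, since it both identifies the integrand appearing on the right-hand side of (ii) and supplies the key equality we will exploit. The guiding observation is that $W$ being an $\ff$-Brownian motion means the increments $W_u - W_s$, $u \geq s$, are independent of $\F_s$. Since $f$ is $\ff$-predictable, $f_s$ is $\F_s$-measurable. Because $\F_1^W = \F_s^W \vee \sigma(W_u - W_s : s \leq u \leq 1)$ and the second factor is independent of $\F_s$, a direct calculation on product sets of the form $A_1 \cap A_2$ with $A_1 \in \F_s^W$, $A_2 \in \sigma(W_u - W_s : u \in [s,1])$ gives $E[f_s \mid \F_1^W] = E[f_s \mid \F_s^W]$ almost surely for each fixed $s$; Fubini then yields the a.e.-$(s,\omega)$ identification. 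For the predictable version I would first check it for simple $\ff$-predictable $f$ (where both conditional expectations inherit an explicit piecewise form) and extend to general square-integrable $f$ through an $L^2(dP\times ds)$ density/monotone class argument.

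For (ii), start with a simple predictable $f_s = \sum_{i=1}^{N} \xi_i \mathbf{1}_{(t_i, t_{i+1}]}(s)$ with bounded $\xi_i \in \F_{t_i}$. Then $\int_0^1 f_s dW_s = \sum_i \xi_i (W_{t_{i+1}} - W_{t_i})$, and since the increments are $\F_1^W$-measurable they pull out of the $\F_1^W$-conditional expectation:
\[
E\Bigl[\int_0^1 f_s dW_s \,\Big|\, \F_1^W\Bigr] = \sum_i E[\xi_i \mid \F_1^W](W_{t_{i+1}} - W_{t_i}).
\]
By part (i), $E[\xi_i \mid \F_1^W] = E[\xi_i \mid \F_{t_i}^W]$, and the right side is exactly $\int_0^1 E[f_s \mid \F_s^W] dW_s$. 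The analogous pull-out argument shows the middle equality.

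To pass to general $f \in L^2(dP \times ds)$, approximate by simple predictable processes $f^{(n)}$ in $L^2(dP \times ds)$. By the Itô isometry, $\int_0^1 f^{(n)}_s dW_s \to \int_0^1 f_s dW_s$ in $L^2(P)$, and since conditional expectation is an $L^2$-contraction the $\F_1^W$-conditional expectations converge in $L^2$. On the other side, $(s,\omega) \mapsto E[f^{(n)}_s \mid \F_s^W]$ converges to $(s,\omega) \mapsto E[f_s \mid \F_s^W]$ in $L^2(dP \times ds)$ (apply the conditional-expectation contraction pathwise in $s$ and integrate via Fubini), so the corresponding Itô integrals converge in $L^2(P)$ by the isometry. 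Taking limits in both sides of the identity verified for simple $f$ closes the proof.

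The main obstacle is the measurability aspect of (i): the pointwise-in-$s$ identification $E[f_s \mid \F_1^W] = E[f_s \mid \F_s^W]$ is almost automatic, but we must simultaneously produce a jointly measurable (in fact predictable) version of $s \mapsto E[f_s \mid \F_s^W]$ so that it is a legitimate integrand in the Itô integral on the right of (ii). This is where the monotone-class extension from simple to general predictable integrands does the real work; once (i) is established in this form, step (ii) reduces to the transparent simple-process computation plus a standard $L^2$-approximation.
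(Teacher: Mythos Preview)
Your proposal is correct and follows essentially the same approach as the paper: verify both (i) and (ii) directly for simple predictable processes $f_s = \sum_i A_i \mathbf{1}_{(t_i,t_{i+1}]}(s)$ with $A_i \in \F_{t_i}$, and then pass to general square-integrable predictable $f$ by $L^2(dP\times ds)$ approximation. The paper's proof is extremely terse (it simply asserts the simple case is clear and that ``the result follows from taking its limit''), whereas you spell out the independence-of-increments argument underlying $E[f_s\mid\F_1^W]=E[f_s\mid\F_s^W]$ and the contraction/It\^o-isometry mechanism for the limit; but the skeleton is the same.
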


\begin{proof}
The conditions (i) and (ii) are clearly satisfied in the case where $f$ is a simple process such as 
\[
f_s = \sum_{i=1}^m A_i 1_{(t_i, t_{i+1}]}(s)
\]
with $t_1 < \cdots < t_{m}$ and $F_{t_i}$-measurable random variables $(A_i)$. 
The result follows from taking its limit. 
\end{proof}

It is worth noticing that 
the stable limit and conditional expectation operations cannot be interchanged in general. 
For instance, let $W$ and $B$ be one-dimensional Brownian motions independent of each other 
and $(\theta_s)$ be a square-integrable predictable process. 
A martingale limit theorem in \cite{JP} shows that 
$\sqrt{n}(\int_0^1\int_{[sn]/n}^s \theta_r dW_r dW_s, \int_0^1\int_{[sn]/n}^s \theta_r dB_r dW_s)$ 
converges $\F$-stably to $\frac{1}{\sqrt{2}}( \int_0^1\theta_s N_s^1,  \int_0^1\theta_s N_s^2)$ 
with a two-dimensional Brownian motion $(N_t) = (N_t^1, N_t^2)$ independent of $\F$. 
Clearly each component of the limit has the same law. 
However one can easily show that 
$\sqrt{n} E[\int_0^1\int_{[sn]/n}^s \theta_r dW_r dW_s | \F_1^W]$ 
and 
$\sqrt{n} E[\int_0^1\int_{[sn]/n}^s \theta_r dB_r dW_s | \F_1^W]$ 
no longer have the same limit. Indeed the pair of the above conditional expectations converges $\F$-stably to 
$(\frac{1}{\sqrt{2}}\int_0^1 E[\theta_s| \F_s^W]dN_s^1, 0)$ by Lemma \ref{lem:fubini-type}.

\section{Limit theorems}\label{sec:limittheorem}
In this section we develop limit theorems for 
certain sequences of conditional expectations. 
Let $B=(B_t)_{t\in[0,1]}$ and $W=(W_t)_{t\in[0,1]}$ be $e$- 
and $d$-dimensional standard Brownian motions defined on 
a complete probability space $(\Omega,\F,P)$. 
We use a step function $\eta_n$ defined by
$\eta_n(t)=[tn]/n, t \in [0,1]$. 

The main result in this section is as follows. 
\begin{theorem}\label{thm:limittheorem}
Let $\Hz \subset \F$ be a sub $\sigma$-field independent of $W$ and set 
$\ff = (\F_t)_{t\in[0,1]}$ where $\F_t := \Hz \vee \F_t^W$, $t \in [0,1]$. 
If $F = (F^1, \dots, F^q) \in L^2(\F_1; \R^q)$ and 
$(\theta_t^{ijk})_{t\in[0,1]}$, $1 \leq i,j \leq d$, $1 \leq k \leq q$ are $\ff$-predictable processes 
satisfying that $E[\int_0^1 |\theta_s^{ijk}|^2ds] < \infty$ for all $i,j,k$, 
then 
\begin{align*}
& \left( \sqrt{n} E\Big[F^k \int_0^1 \int_{\eta_n(s)}^s \theta_r^{ijk} dW_r^j dW_s^i \Big|\F_1^W\Big] \right)_{1 \leq i,j \leq d, 1 \leq k \leq q}
\\ & \Longrightarrow^{s-\F}\left( \frac{1}{\sqrt{2}} \int_0^1 E[F^k\theta_s^{ijk}|\F_1^W] dN_s^{ij} \right)_{1 \leq i,j \leq d, 1 \leq k \leq q}
\end{align*}
where $\{(N_t^{ij})_{t\in[0,1]}: 1\leq i,j\leq d\}$ is a $d^2$-dimensional standard Brownian motion independent of $\F$. 
The stochastic integral in the limit is well-defined 
with respect to the filtration $(\F_1 \vee \F_t^N)_{t\in[0,1]}$. 
\end{theorem}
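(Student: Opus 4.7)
The plan is to prove this via a four-step strategy: (i) localize so $F$ and $\theta$ are bounded; (ii) approximate $\theta$ by step processes and reduce to that case by Lemma \ref{lem:sublimit}; (iii) for step $\theta$, factor the purely $W$-dependent piece out of the conditional expectation; (iv) apply Jacod's martingale CLT to the remaining scalar $W$-martingales.

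For step (ii), after truncation I choose $\theta^{ijk,m}_s := \sum_l \theta^{ijk,m}_l 1_{(t_l^m,t_{l+1}^m]}(s)$, with $\theta^{ijk,m}_l$ bounded and $\F_{t_l^m}$-measurable, such that $\|\theta^{ijk,m}-\theta^{ijk}\|_{L^2(dt\otimes dP)}\to 0$. Denoting by $Z_n^{ijk}$ and $Z_n^{ijk,m}$ the corresponding expressions on the left of the theorem, conditional Jensen combined with It\^o's isometry on the double integral (using that for each $r$, $\{s:\eta_n(s)\leq r\leq s\}$ has length $\leq 1/n$) gives the uniform-in-$n$ estimate $\|Z_n^{ijk}-Z_n^{ijk,m}\|_{L^2(P)} \leq \|F^k\|_\infty\,\|\theta^{ijk}-\theta^{ijk,m}\|_{L^2}$, which verifies hypothesis (a) of Lemma \ref{lem:sublimit}.

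In the step case, for $s$ with $[\eta_n(s),s]\subset(t_l^m,t_{l+1}^m]$ one has $\int_{\eta_n(s)}^s \theta_r^{ijk,m}dW_r^j = \theta^{ijk,m}_l(W_s^j - W^j_{\eta_n(s)})$, while the $s$'s crossing a breakpoint contribute $O(\sqrt{L_m/n})$ in $L^2$ after multiplication by $\sqrt n$, which vanishes as $n\to\infty$ for fixed $m$. Thus
\[
Z_n^{ijk,m} \;=\; \sum_{l=0}^{L_m-1} G^{k,m}_l\,J_n^{ij,l} + o_P(1),
\]
where $J_n^{ij,l}:=\sqrt n\int_{t_l^m}^{t_{l+1}^m}(W_s^j - W^j_{\eta_n(s)})dW_s^i$ and $G^{k,m}_l:=E[F^k\theta^{ijk,m}_l\mid\F_1^W]$; here I crucially use that $J_n^{ij,l}$ is $\F_1^W$-measurable and hence factors out of the conditional expectation. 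The martingale limit theorem of \cite{J97} (in the formulation of \cite{JP}) applied jointly to the continuous $\F$-martingales $\{J_n^{ij,l}\}$ together with $W$ — with bracket limits $\tfrac12\delta_{(i,j)(i',j')}\delta_{ll'}(t^m_{l+1}-t_l^m)$ supplied by Lemma \ref{lem:nonzero_conv}, and with brackets against $W$ vanishing by a variance computation — yields $\F$-stable convergence to $\bigl(\tfrac{1}{\sqrt 2}(N^{ij}_{t^m_{l+1}}-N^{ij}_{t_l^m})\bigr)$ for independent standard Brownian motions $\{N^{ij}\}$ independent of $\F$. Since $G^{k,m}_l\in\F_1^W\subset\F$, stable convergence is preserved under multiplication, so
\[
Z_n^{ijk,m} \;\Longrightarrow^{s-\F}\; \tfrac{1}{\sqrt 2}\int_0^1 E[F^k\theta^{ijk,m}_s\mid\F_1^W]\,dN^{ij}_s,
\]
which is hypothesis (b). For hypothesis (c), It\^o's isometry on the extension, together with the $L^2$-contractivity of conditional expectation, shows that these approximate limits converge in $L^2(\hat P)$ to $\tfrac{1}{\sqrt 2}\int_0^1 E[F^k\theta^{ijk}_s\mid\F_1^W]dN^{ij}_s$ as $m\to\infty$. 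Lemma \ref{lem:sublimit} then concludes.

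The hard part, I expect, will be step (iii): the warning example at the end of Subsection 2.2 shows that stable convergence does not commute with conditional expectation, so one cannot simply take the known stable limit of $\sqrt n\int_0^1\int_{\eta_n(s)}^s\theta_r dW_r^j dW_s^i$ and condition at the end. The step-process approximation is precisely what makes the commutation legitimate — for piecewise-constant $\theta$ the purely $W$-driven factor $J_n^{ij,l}$ can be pulled out algebraically — and the technical care lies in controlling the near-breakpoint contributions uniformly in $n$ while keeping the $m\to\infty$ approximation estimates honest.
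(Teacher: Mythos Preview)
Your proof is correct and takes a genuinely different route from the paper's. The paper approximates $F$ rather than $\theta$: it writes $F_m=\sum_l\Psi_{ml}^{\mathcal H}\Psi_{ml}^W$ with $\Psi_{ml}^{\mathcal H}\in b\mathcal H$ and $\Psi_{ml}^W\in b\mathcal F_1^W$, pulls $\Psi_{ml}^W$ out of the conditional expectation, and then invokes the Fubini-type Lemma~\ref{lem:fubini-type} to rewrite $E\big[\Psi_{ml}^{\mathcal H}\int_0^1\!\int_{\eta_n(s)}^s\theta_r\,dW_r^j\,dW_s^i\,\big|\,\mathcal F_1^W\big]$ as a double $W$-integral of $E[\Psi_{ml}^{\mathcal H}\theta_r\mid\mathcal F_r^W]$, to which Jacod's CLT applies directly. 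You instead discretize $\theta$ and exploit that, for a step $\theta$, the whole double integral $J_n^{ij,l}$ is already $\mathcal F_1^W$-measurable and therefore factors out of the conditional expectation with no Fubini lemma needed; the price is the extra truncation of $F$ in step~(i) (whose removal is another, easy, instance of Lemma~\ref{lem:sublimit}) and the breakpoint bookkeeping. Your route avoids both Lemma~\ref{lem:fubini-type} and the tensor-product density argument, and is arguably more elementary; the paper's route keeps $\theta$ intact throughout, which makes the passage to the variant with an additional continuous factor $\lambda_s$ (Corollary~\ref{rem:important}) a one-line perturbation rather than a repetition of the step-process machinery.
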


\begin{proof}
We prove here only the case $q=1$ for notational simplicity. Let us denote $F := F^1$ and $\theta^{ij} := \theta^{ij1}$. 
We first define $X_n =(X_n(i,j))_{1 \leq i,j \leq d}$ by 
\[
X_n(i,j) := \sqrt{n} E\Big[F \int_0^1 \int_{\eta_n(s)}^s \theta_r^{ij} dW_r^j dW_s^i \Big|\F_1^W\Big]. 
\]
In what follows, we consider an extended probability space 
\[
(\hat{\Omega},\hat{\F},\hat{P}) := (\Omega,\F,P) \times \prod_{1\leq i,j \leq d} (\Omega^{ij},\F^{ij},P^{ij})
\]
where, for each $1 \leq i,j\leq d$, $(\Omega^{ij},\F^{ij},P^{ij})$ is the Wiener space on which $N^{ij}=(N_t^{ij})_{t \in [0,1]}$ 
is the canonical Brownian motion. 
Set $X =(X(i,j))_{1 \leq i,j \leq d}$ as 
\[
X(i,j) = \frac{1}{\sqrt{2}} \int_0^1 E[F \theta_s^{ij} |\F_1^W] dN_s^{ij}.
\]
Here the above stochastic integral is defined with respect to the filtration $(\F_1^W\vee\F_t^N)_{t\in[0,1]}$. 
The idea of the proof of the theorem can be sketched as 
the following diagram stated in Lemma \ref{lem:sublimit} (with $E = \R^{d^2}$)
\[
\begin{CD}
X_n @>>> X\\
@A{(a)}AA @AA{(c)}A \\
X_n^m @>>{(b) \text{ stable limit}}> X^m
\end{CD}
\]
where $(X_n^m)$ and $(X^m)$ are approximation sequences defined below 
and (a), (b) and (c) indicate the convergence in Lemma \ref{lem:sublimit}. 

Step $1$: The goal of this step is to find a sequence $X_n^m$ 
which satisfies the convergence (a) 
and is also suitable for taking the limit (b) and (c). 
Since $F$ is $\Hz \vee \F_1^W$-measurable,  
we can choose an approximation sequence $F_m$ given by 
\begin{align*}
F_m = \sum_{l=1}^m \Psi_{ml}^{\Hz}\Psi_{ml}^W, \quad \Psi_{ml}^{\Hz} \in b\Hz, \Psi_{ml}^W \in b\F_1^W
\end{align*}
and satisfying 
\begin{align*}
\lim_{m\rightarrow\infty} \|F - F_m\|_2 = 0. 
\end{align*}
The auxiliary process $X_n^m =(X_n^m(i,j))_{1 \leq i,j \leq d}$ is defined by 
\[
X_n^m(i,j) := \sqrt{n} E\Big[F_m \int_0^1 \int_{\eta_n(s)}^s \theta_r^{ij} dW_r^j dW_s^i \Big|\F_1^W\Big].
\]
By the Cauchy-Schwarz inequality, we have 
\begin{align*}
\sup_n \|X_n^m(i,j) - X(i,j)\|_1 
& \leq \sup_n \Big\|E\Big[(F-F_m) \sqrt{n} \int_0^1 \int_{\eta_n(s)}^s \theta_r^{ij} dW_r^j dW_s^i\Big|\F_1^W\Big] \Big\|_1 
\\ &\leq \|F - F_m\|_2 \Big\| \sqrt{n} \int_0^1 \int_{\eta_n(s)}^s \theta_r^{ij} dW_r^j dW_s^i \Big\|_2. 
\end{align*}
Using It\^o's isometry, we obtain 
\begin{align*}
\Big\| \sqrt{n} \int_0^1 \int_{\eta_n(s)}^s \theta_r^{ij} dW_r^j dW_s^i \Big\|_2^2 
& = E\Big[ n \int_0^1 \int_{\eta_n(s)}^s |\theta_r^{ij}|^2 dr ds\Big] 
\\ & \leq \int_0^1 E[|\theta_s^{ij}|^2] ds. 
\end{align*}
This yields $\sup_n \|X_n^m - X_n\|_1 \rightarrow 0$ as $m \rightarrow \infty$, 
hence $X_n^m$ satisfies the desired condition (a). 

Step $2$: Let $m$ be fixed. We next consider the stable limit (b) of $X_n^m$ for each $m$. 
We note that the discussion in this step will be based on the martingale limit theorems provided in \cite{J97} and \cite{JP}. 
We can show by the independence of $\Hz$ and $\F_1^W$ that 
\begin{align*}
\sqrt{n} E\Big[F_m \int_0^1 \int_{\eta_n(s)}^s \theta_r^{ij} dW_r^j dW_s^i\Big|\F_1^W\Big] 
&= \sqrt{n} \sum_{l=1}^m \Psi_{ml}^W E\Big[\Psi_{ml}^{\Hz} \int_0^1 \int_{\eta_n(s)}^s \theta_r^{ij} dW_r^j dW_s^i\Big|\F_1^W\Big]
\\ 
&= \sum_{l=1}^m \Psi_{ml}^W \sqrt{n} \int_0^1 \int_{\eta_n(s)}^s E[\Psi_{ml}^{\Hz} \theta_r^{ij} |\F_r^W] dW_r^j dW_s^i. 
\end{align*}
Set $\xi_t^{lij} := E[\Psi_{ml}^{\Hz} \theta_t^{ij} |\F_t^W]$. 
As seen in \cite[pp 290-293]{JP}, we obtain for each $t >0$, 
\begin{align*}
\sqrt{n} \Big\langle \int_0^{\cdot} \int_{\eta_n(s)}^s \xi_r^{lij}dW_r^jdW_s^i, W_{\cdot}^{i'} \Big\rangle_t 
&= \delta_{ii'} \sqrt{n} \int_0^t \int_{\eta_n(s)}^s \xi_r^{lij}dW_r^j ds \rightarrow^P 0, 
\\ n \Big\langle \int_0^{\cdot} \int_{\eta_n(s)}^s \xi_r^{lij}dW_r^jdW_s^i, \int_0^{\cdot} \int_{\eta_n(s)}^s \xi_r^{l'i'j'}dW_r^{j'}dW_s^{i'} \Big\rangle_t 
 & = \delta_{ii'} n \int_0^t \Big(\int_{\eta_n(s)}^s \xi_r^{lij}dW_r^j \int_{\eta_n(s)}^s \xi_r^{l'i'j'}dW_r^{j'}\Big) ds 
\\ & \rightarrow^P \frac{1}{2} \delta_{ii'}\delta_{jj'}\int_0^t \xi_s^{lij} \xi_s^{l'i'j'} ds. 
\end{align*}
These imply the stable convergence of the following $(m\times d^2)$-dimensional process 
in Skorohod space (see \cite[Theorem 2.1]{J97}). 
\begin{align}
& \Big(\sqrt{n} \int_0^\cdot \int_{\eta_n(s)}^s E[\Psi_{ml}^{\Hz} \theta_r^{ij} |\F_r^W] dW_r^j dW_s^i \Big)_{1\leq l \leq m, 1\leq i,j \leq d}
\nonumber 
\\ & \Longrightarrow^{s-\F} 
\Big(\frac{1}{\sqrt{2}} \int_0^\cdot E[\Psi_{ml}^{\Hz} \theta_s^{ij} |\F_s^W] dN_s^{ij}\Big)_{1\leq l \leq m, 1\leq i,j \leq d.}
\label{eq:jpresults}
\end{align}
By the definition of stable convergence, in particular, 
the result (\ref{eq:jpresults}) implies that 
\begin{align*}
& \Big(\sum_{l=1}^m \Psi_{ml}^W \sqrt{n} \int_0^1 \int_{\eta_n(s)}^s E[\Psi_{ml}^{\Hz} \theta_r^{ij} |\F_r^W] dW_r^j dW_s^i 
\Big)_{1\leq i,j \leq d}
\\ & \Longrightarrow^{s-\F} 
\Big(\sum_{l=1}^m \Psi_{ml}^W \frac{1}{\sqrt{2}} \int_0^1 E[\Psi_{ml}^{\Hz} \theta_s^{ij} |\F_s^W] dN_s^{ij} 
\Big)_{1\leq i,j \leq d.}
\end{align*}
By Lemma \ref{lem:fubini-type}, we get $E[\Psi_{ml}^{\Hz} \theta_s^{ij} |\F_s^W] = E[\Psi_{ml}^{\Hz} \theta_s^{ij} |\F_1^W]$ a.e.-$(s.\omega)$. 
Hence we have 
\begin{align*}
\sum_{l=1}^m \Psi_{ml}^W \frac{1}{\sqrt{2}} \int_0^1 E[\Psi_{ml}^{\Hz} \theta_s^{ij} |\F_s^W] dN_s^{ij}
& = \frac{1}{\sqrt{2}} \int_0^1 E\Big[\sum_{l=1}^m \Psi_{ml}^{\Hz} \Psi_{ml}^W \theta_s^{ij} \Big|\F_1^W\Big] dN_s^{ij}
\\ & = 
\frac{1}{\sqrt{2}} \int_0^1 E[F_m \theta_s^{ij} |\F_1^W] dN_s^{ij}
\\ & =: X^m(i,j). 
\end{align*} 
So we get (b). That is, $X_n^m \Longrightarrow^{s-\F} X^m = (X^m(i,j))_{1\leq i,j \leq d}$ for each $m$ 
in the extended probability space $(\hat{\Omega},\hat{\F},\hat{P})$. 

Step $3$: In order to prove (c), we will show that 
$X^m$ converges to $X$ in $L^1(\hat{\Omega},\hat{\F},\hat{P})$. 
By the Burkholder-Davis-Gundy inequality, $(X^m-X)$ satisfies 
\[
\|X^m(i,j)-X(i,j)\|_{L^1(\hat{\Omega}, \hat{\F}, \hat{P})} 
\leq C E\Big[\Big( \int_0^1 |E[(F_m-F) \theta_s^{ij} |\F_1^W]|^2 ds \Big)^{1/2}\Big].
\]
Using the Cauchy-Schwarz inequality for conditional expectations, we have
\[
\int_0^1 | E[(F-F_m) \theta_s^{ij} |\F_1^W] |^2 ds \leq E[(F-F_m)^2|\F_1^W] \int_0^1 E[|\theta_s^{ij}|^2|\F_1^W] ds
\]
Since $\|F_m-F\|_2 \rightarrow 0$, we have $\|E[(F-F_m)^2|\F_1^W]\|_1 \rightarrow 0$. Thus, 
\begin{align*}
X^m(i,j) = \frac{1}{\sqrt{2}} \int_0^1 E[F_m \theta_s^{ij} |\F_1^W] dN_s^{ij}
\rightarrow \frac{1}{\sqrt{2}} \int_0^1 E[F \theta_s^{ij} |\F_1^W] dN_s^{ij}
\end{align*}
in $L^1(\hat{\Omega}, \hat{\F}, \hat{P})$. Hence we get the claim (c). 

From Steps $1$ to $3$, we obtain $X_n \Longrightarrow^{s-\F} X$ 
in the extended probability space $(\hat{\Omega},\hat{\F},\hat{P})$ by virtue of Lemma \ref{lem:sublimit}. 
This finishes the proof. 
\end{proof}

\begin{remark}\label{rem:important}
For the later use in Section \ref{subsec:proof2}, 
we should mention that Theorem \ref{thm:limittheorem} can be modified to 
\begin{align*}
&\left( \sqrt{n} E\Big[F^k \int_0^1 \Big(\int_{\eta_n(s)}^s \theta_r^{ijk} dW_r^j\Big)
 \lambda_s^{ijk} dW_s^i \Big|\F_1^W\Big] \right)_{1 \leq i,j \leq d, 1 \leq k \leq q}
\\ & \Longrightarrow^{s-\F}\left( \frac{1}{\sqrt{2}} \int_0^1 E[F\theta_s^{ijk}\lambda_s^{ijk}|\F_1^W] dN_s^{ij} \right)_{1 \leq i,j \leq d, 1 \leq k \leq q}
\end{align*}
where $\theta^{ijk}$ and $\lambda^{ijk}$ are predictable processes which have finite moments of all order. 
Clearly, if $t \mapsto \lambda_t^{ijk}$ is bounded left continuous, then we get 
\begin{align}\label{eq:diff_integ}
\sqrt{n} \int_0^1 \Big(\int_{\eta_n(s)}^s \theta_r^{ijk} dW_r^j\Big)\lambda_s^{ijk} dW_s^i 
= \sqrt{n} \int_0^1 \Big(\int_{\eta_n(s)}^s \theta_r^{ijk}\lambda_r^{ijk} dW_r^j\Big) dW_s^i + o_P(1), 
\end{align}
where the remainder corresponding to $o_P(1)$ is uniformly integrable. 
In general cases, the result is proved by an approximation of $\lambda_t^{ijk}$ by simple processes. 
\end{remark}

We next derive limit theorems for the same type of conditional expectations 
\[
E[F \int_0^1\int_{\eta_n(s)}^s \theta_s dX_s dY_s|\F_1^W]
\]
so that at least one of $(X,Y)$ is not $W^i$. 
We use the notation $W_t^0 = B_t^0 = t$ for the simplicity of statements below. 

\begin{theorem}\label{thm:zerolimit2}
Let $\Hz$ be a sub $\sigma$-field of $\F$, and $\Hz, \F_1^B, \F_1^W$ are independent. 
Set $\ff = (\F_t)_{t\in[0,1]}$ where $\F_t := \Hz \vee \F_t^B \vee \F_t^W$, $t \in [0,1]$. 
Suppose that $F \in L^2(\F_1)$ and $(\theta_t)_{t\in[0,1]}$ is an $\ff$-predictable process with $E[\int_0^1\theta_t^2dt]<\infty$.  
Then we have for any $0 \leq i,j \leq d$, 
\begin{align}
& \sqrt{n} E\Big[F \int_0^1 \int_{\eta_n(s)}^s \theta_r dB_r^j dB_s^i\Big|\F_1^W\Big] \rightarrow 0, \label{eq:p1}
\\ & \sqrt{n} E\Big[F \int_0^1 \int_{\eta_n(s)}^s \theta_r dB_r^j dW_s^i\Big|\F_1^W\Big] \rightarrow 0, \label{eq:p2}
\\ & \sqrt{n} E\Big[F \int_0^1 \int_{\eta_n(s)}^s \theta_r dW_r^j dB_s^i\Big|\F_1^W\Big] \rightarrow 0, \label{eq:p3}
\end{align}
in probability and in $L^1$ as $n \rightarrow \infty$.
\end{theorem}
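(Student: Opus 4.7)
The plan is to mirror the approximation scheme used in the proof of Theorem \ref{thm:limittheorem}. First, using the mutual independence of $\Hz$, $\F_1^B$, and $\F_1^W$, I approximate $F \in L^2(\F_1)$ in $L^2$ by finite sums $F_m = \sum_{l=1}^m \alpha_l \Psi_l^W \Phi_l^B$ with $\alpha_l \in b\Hz$, $\Psi_l^W \in b\F_1^W$, and $\Phi_l^B \in b\F_1^B$. The uniform bound $\|\sqrt{n} \int_0^1 \int_{\eta_n(s)}^s \theta_r \, dX_r \, dY_s\|_2 \leq C$, obtained from It\^o's isometry applied to the iterated integral, together with Cauchy--Schwarz reduces all three claims to proving the conclusion for a single summand $F = \alpha \Psi^W \Phi^B$. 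The factor $\Psi^W$ pulls out of $E[\cdot \mid \F_1^W]$, so it remains to treat $\sqrt{n} E[\alpha \Phi^B \cdot I_n \mid \F_1^W]$, where $I_n$ denotes the relevant iterated integral.

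Second, I exploit the independence of $B$ and $W$: under the initial enlargement $(\F_1^W \vee \Hz \vee \F_t^B)_{t \in [0,1]}$, the process $B$ remains a Brownian motion and $\theta$ remains predictable, so every $B$-stochastic integral is a martingale starting at $0$. For (\ref{eq:p1}), $I_n$ is itself such a double $B$-martingale. For (\ref{eq:p2}) and (\ref{eq:p3}), a stochastic Fubini (justified by the independence of $B$ and $W$) converts the mixed iterated integral into a single It\^o integral; for instance,
\[
\int_0^1 \int_{\eta_n(s)}^s \theta_r \, dB_r^j \, dW_s^i = \int_0^1 \theta_r \bigl( W^i_{(\eta_n(r)+1/n) \wedge 1} - W^i_r \bigr) \, dB_r^j,
\]
whose integrand is predictable with respect to the enlarged filtration. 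Substituting the $B$-martingale representation $\alpha \Phi^B = E[\alpha \Phi^B \mid \Hz] + \int_0^1 K_s \cdot dB_s$, the $E[\cdot \mid \Hz]$ contribution vanishes by the $B$-martingale property, and the conditional It\^o isometry collapses the remaining cross term to a $ds$-integral of the form $\int_0^1 E[K_s^j \, \xi_s \mid \F_1^W] \, ds$, where $\xi_s$ contains $\theta_s$ together with the short-length Brownian increment produced by Fubini (or $\xi_s = \int_{\eta_n(s)}^s \theta_r dB_r^j$ in the pure $B$--$B$ case).

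Third, the convergence is closed off by an $L^2$-estimate. For (\ref{eq:p2}) and (\ref{eq:p3}), a further Fubini plus It\^o-isometry argument bounds the $L^2$-norm of $\sqrt{n}$ times the resulting $ds$-integral by $O(1/\sqrt{n})$, because the Brownian increment in $\xi_s$ is concentrated on a strip of $s$-width $O(1/n)$ around the diagonal. The main obstacle is (\ref{eq:p1}): a single application of the martingale representation yields only an $O(1)$ bound, since the pointwise estimate $|E[K_s^i g_s \mid \F_1^W]| \leq \|K_s^i\|_2 \|g_s\|_2 = O(1/\sqrt{n})$ saturates after integrating in $s$ and multiplying by $\sqrt{n}$. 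I would remedy this by a second martingale expansion of $K_s^i$ on the subinterval $[\eta_n(s),s]$: its $\F_{\eta_n(s)}^B \vee \Hz$-measurable part is killed by the martingale property of $g_s = \int_{\eta_n(s)}^s \theta_r \, dB_r^j$, while the fluctuating part contributes an extra factor $\sqrt{1/n}$ through another conditional It\^o isometry, which improves the bound to $O(1/n)$ per $s$ and gives $\sqrt{n} \cdot O(1/n) \to 0$. A standard density argument propagates these manipulations from a Malliavin-smooth dense subclass to general $\Phi^B \in L^2(\F_1^B)$.
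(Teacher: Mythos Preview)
Your reduction is the same as the paper's: approximate $F$ by sums $\sum_l \Psi_l^{\Hz}\Psi_l^W\Psi_l^B$, pull out $\Psi_l^W$, and use the It\^o representation $\Psi^B = E[\Psi^B] + \int_0^1 f_s\,dB_s$ together with the uniform $L^2$-bound on the iterated integral. For (\ref{eq:p2}) your computation is essentially identical to the paper's (the paper writes it via Lemma~\ref{lem:fubini-type} as $\sqrt{n}\int_0^1\int_{\eta_n(s)}^s E[\Psi^{\Hz}f_r^j\theta_r\mid\F_r^W]\,dr\,dW_s^i$, which is your $ds$-integral after one more Fubini).

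The genuine divergence is in how (\ref{eq:p1}) and (\ref{eq:p3}) are closed. After the It\^o-isometry step both you and the paper arrive at
\[
\sqrt{n}\,E\Big[\int_0^1 \Big(\int_{\eta_n(s)}^s \Psi^{\Hz}\theta_r\,dX_r^j\Big) f_s^i\,ds \,\Big|\,\F_1^W\Big],
\]
with $X=W$ or $B$. Here the paper does \emph{not} try to estimate the conditional expectation; instead it shows that the \emph{unconditioned} random variable $\sqrt{n}\int_0^1(\int_{\eta_n(s)}^s\Psi^{\Hz}\theta_r\,dX_r^j)f_s^i\,ds$ converges to $0$ in probability by the Jacod--Protter estimates of Lemma~\ref{lem:zero_conv} (this is the $\sqrt{n}\int_0^t(M_s-M_{\eta_n(s)})\,dA_s\to^P 0$ statement), and then passes to $L^1$ via uniform integrability, which immediately gives the conditional version. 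This avoids entirely the ``saturation'' problem you encountered for (\ref{eq:p1}). Your remedy --- a second martingale expansion of $K_s^i$ on $[\eta_n(s),s]$ followed by a density argument through Malliavin-smooth $\Phi^B$ --- is correct, but note that the claimed ``extra factor $\sqrt{1/n}$'' really comes from $\|K_s^i-E[K_s^i\mid\F_{\eta_n(s)}^B\vee\Hz]\|_2$, which is only $o(1)$ in general and needs the smoothness restriction to become $O(n^{-1/2})$; the density step is therefore not optional. The paper's route is shorter and requires no regularity of the representation kernel, at the cost of importing the Jacod--Protter lemma.
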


\begin{proof}
Let $\tilde{\theta}_t = \tilde{\theta}_t(K)$ be a bounded process defined as 
\[
  \tilde{\theta}_t = 
  \left\{ \begin{array}{ll}
    \theta_t, & -K \leq \theta_t \leq K \\
    K,  & \theta_t > K \\
    -K, & \theta_t < -K
  \end{array} \right.
\]
First we consider an $L^2$-approximation sequence $(F_m)$ for $F \in L^2(\F_1)$. 
For $X,Y = B^i$ or $W^j$ (at least one of them is $B^i$, $0\leq i \leq d$), we have 
\begin{align*}
\Big\|\sqrt{n} E\Big[F \int_0^1 \int_{\eta_n(s)}^s \theta_r dX_r dY_s\Big|\F_1^W\Big]\Big\|_1 
&\leq \|F - F_m\|_2 \Big(\int_0^1 E[|\theta_s|^2] ds\Big)^{\frac{1}{2}} 
\\ & \quad + \|F_m\|_2 \Big(\int_0^1 E[|\theta_s -\tilde{\theta}_s|^2] ds\Big)^{\frac{1}{2}} 
\\ & \quad + \Big\|\sqrt{n} E\Big[F_m \int_0^1 \int_{\eta_n(s)}^s \tilde{\theta}_r dX_r dY_s\Big|\F_1^W\Big]\Big\|_1. 
\end{align*}
From the above inequality, it suffices to construct $(F_m)_{m\in\N}$ such that $\|F - F_m\|_2 \rightarrow 0$ 
as $m\rightarrow \infty$ and 
\begin{align*}
\lim_{n\rightarrow \infty}\Big\|\sqrt{n} E\Big[F_m \int_0^1 \int_{\eta_n(s)}^s \tilde{\theta}_r dX_r dY_s\Big|\F_1^W\Big]\Big\|_1 = 0, 
\quad \forall m \in \N, \ \forall K>0.
\end{align*} 
We now construct $F_m$ as follows. Since $F$ is $\Hz \vee \F_1^B \vee \F_1^W$-measurable, 
we can get an $L^2$-approximation sequence $(F_m)_{m\in\N}$ of the form 
\[
F_m = \sum_{l=1}^m \Psi_{ml}^{\Hz}\Psi_{ml}^B\Psi_{ml}^W, 
\quad \Psi_{ml}^{\Hz} \in b\Hz, \Psi_{ml}^B \in b\F_1^B, \Psi_{ml}^W \in b\F_1^W.
\]
Substituting this into $F_m$, we obtain 
\[
E\Big[F_m \int_0^1 \int_{\eta_n(s)}^s \tilde{\theta}_r dX_r dY_s\Big|\F_1^W\Big] = 
\sum_{l=1}^m \Psi_{ml}^W E\Big[ \Psi_{ml}^{\Hz}\Psi_{ml}^B \int_0^1 \int_{\eta_n(s)}^s \tilde{\theta}_r dX_r dY_s\Big|\F_1^W\Big]. 
\]
Hence our goal is to prove 
\begin{align*}
\lim_{n\rightarrow \infty}\Big\|\sqrt{n} E\Big[\Psi^{\Hz}\Psi^B \int_0^1 \int_{\eta_n(s)}^s \tilde{\theta}_r dX_r dY_s\Big|\F_1^W\Big]\Big\|_1 = 0, 
\quad \Psi^{\Hz} \in b\Hz, \Psi^B \in b\F_1^B, 
\end{align*}
which is equivalent to the condition 
\begin{align}\label{eq:prop_goal}
\sqrt{n} E\Big[\Psi^{\Hz}\Psi^B \int_0^1 \int_{\eta_n(s)}^s \tilde{\theta}_r dX_r dY_s\Big|\F_1^W\Big] \rightarrow^P 0, 
\quad \Psi^{\Hz} \in b\Hz, \Psi^B \in b\F_1^B. 
\end{align}
Here we used the fact that the sequence $(\sqrt{n}\int_0^1\int_{\eta_n(s)}^s \tilde{\theta}_r dX_r dY_s)_{n}$ is $L^2$-bounded. 

In the case $X_t = t$, the limit (\ref{eq:prop_goal}) follows from the Cauchy-Schwarz inequality. 
In other cases, we can easily prove $\sqrt{n}E[\Psi^{\Hz}\int_0^1 \int_{\eta_n(s)}^s \tilde{\theta}_r dX_r dY_s|\F_1^W] \rightarrow^P 0$. 
Therefore without loss of generality, we may assume $E[\Psi^B]=0$ and $\Psi^B = \sum_{l=1}^d \int_0^1 f_s^l dB_s^l$ (by It\^o's representation theorem). 

Case $X_t = W_t^j$ $(1\leq j \leq d)$, $Y_t = t$: 
\begin{align*}
\sqrt{n} E\Big[\Psi^{\Hz}\Psi^B \int_0^1 \int_{\eta_n(s)}^s \tilde{\theta}_r dW_r^j ds\Big|\F_1^W\Big] 
& = \sqrt{n} \int_0^1 \int_{\eta_n(s)}^s E[\Psi^{\Hz}\Psi^B \tilde{\theta}_r |\F_r^W] dW_r^j ds
\\ & \rightarrow^P 0.
\end{align*}

Case $X_t = B_t^j$ $(1\leq j \leq d)$, $Y_t = W_t^i$ $(0\leq i \leq d)$: 
\begin{align*}
\sqrt{n} E\Big[\Psi^{\Hz}\Psi^B \int_0^1 \int_{\eta_n(s)}^s \tilde{\theta}_r dB_r^j dW_s^i\Big|\F_1^W\Big] 
&= \sqrt{n} \int_0^1 E\Big[ \Psi^B \int_{\eta_n(s)}^s \Psi^{\Hz} \tilde{\theta}_r dB_r^j \Big|\F_s^W\Big] dW_s^i 
\\ & = \sqrt{n} \int_0^1 \int_{\eta_n(s)}^s E[\Psi^{\Hz} f_r^j \tilde{\theta}_r |\F_r^W]dr dW_s^i
\\ & \rightarrow^P 0. 
\end{align*}

Case $X_t = W_t^j$ or $B_t^j$ $(1\leq j \leq d)$, $Y_t = B_t^i$ $(1\leq i \leq d)$: 
\begin{align*}
\sqrt{n} E\Big[\Psi^{\Hz}\Psi^B \int_0^1 \int_{\eta_n(s)}^s \tilde{\theta}_r dX_r^j dB_s^i\Big|\F_1^W\Big] 
&= \sqrt{n} E\Big[\int_0^1 \Big(\int_{\eta_n(s)}^s \Psi^{\Hz} \tilde{\theta}_r dX_r^j\Big) f_s^i ds \Big|\F_1^W\Big]. 
\end{align*}
We can prove 
\[
\sqrt{n}\int_0^1 \Big(\int_{\eta_n(s)}^s \Psi^{\Hz}\tilde{\theta}_r dX_r^j\Big) f_s^i ds \rightarrow^P 0
\] 
through the approximation of $f^i$ by simple processes 
as in the proof of \cite[Theorem 5.5]{JP}. Notice that 
$(\sqrt{n}\int_0^1 (\int_{\eta_n(s)}^s \Psi^{\Hz}\tilde{\theta}_r dX_r^j) f_s^i ds)$ is uniformly integrable. 
Consequently we have 
\[
\sqrt{n} E\Big[\int_0^1 \Big(\int_{\eta_n(s)}^s \Psi^{\Hz}\tilde{\theta}_r dX_r^j\Big) f_s^i ds \Big|\F_1^W\Big]
\rightarrow^P 0. 
\]
The proof is now complete.
\end{proof}


The results in Theorem \ref{thm:zerolimit2} imply that 
some of these sequences may converge to a non-zero random variable with higher-order scaling. 
For example, we can easily prove that, for square-integrable $F$ and $(\theta_s)_{s \in [0,1]}$, 
\[
n E\Big[F\int_0^1 \int_{\eta_n(s)}^{s}\theta_r dr ds \Big| \F_1^W\Big] \rightarrow^P \frac{1}{2} E\Big[F\int_0^1 \theta_s ds \Big| \F_1^W\Big].
\]
For a further understanding of limit theorems for conditional expectations, 
we study the asymptotic distribution for 
$n E[F\int_0^1 \int_{\eta_n(s)}^{s}\theta_r dr dW_s | \F_1^W]$, 
which can be proved in a similar way to Theorem \ref{thm:limittheorem}. 

\begin{theorem}
Let $\Hz \subset \F$ be a sub $\sigma$-field independent of $W$ and define the filtration $\ff = (\F_t)_{t \in [0,1]}$
as $\F_t := \Hz \vee \F_t^W$, $t \in [0,1]$. 
If $F = (F^1, \dots, F^q) \in L^2(\F_1; \R^q)$ and 
$(\theta_t^{ik})_{t\in[0,1]}$, $1 \leq i \leq d$, $1 \leq k \leq q$ are $\ff$-predictable processes 
with $E[\int_0^1 |\theta_s^{ik}|^2ds] < \infty$, 
then 
\begin{align*}
& \left( n E\Big[F^k \int_0^1 \int_{\eta_n(s)}^s \theta_r^{ik} dr dW_s^i \Big|\F_1^W\Big] \right)_{1 \leq i\leq d, 1 \leq k \leq q}
\\ & \Longrightarrow^{s-\F}\left( 
\frac{1}{2} E\Big[F^k \int_0^1 \theta_s^{ik} dW_s^i \Big|\F_1^W\Big]
+\frac{1}{\sqrt{12}} \int_0^1 E[F^k\theta_s^{ik}|\F_1^W] dN_s^{i} \right)_{1 \leq i\leq d, 1 \leq k \leq q}
\end{align*}
where $\{(N_t^{i})_{t\in[0,1]}: 1\leq i\leq d\}$ is a $d$-dimensional standard Brownian motion independent of $\F$.
\end{theorem}

\begin{proof}
We prove only the case $q=1$. We keep the notation for $F_m$ in the proof of Theorem \ref{thm:limittheorem}. 
Let us define the extension 
$(\hat{\Omega},\hat{\F},\hat{P}) := (\Omega,\F,P) \times \prod_{1\leq i \leq d} (\Omega^{i},\F^{i},P^{i})$ 
where, for each $1 \leq i \leq d$, $(\Omega^{i},\F^{i},P^{i})$ is the Wiener space on which $N^{i}=(N_t^{i})_{t \in [0,1]}$ 
is the canonical Brownian motion. 

By the Cauchy-Schwarz inequality, we can show that 
\[
\sup_n \Big\|E\Big[(F-F_m) n \int_0^1 \int_{\eta_n(s)}^s \theta_r^{i} dr dW_s^i\Big|\F_1^W\Big] \Big\|_1 \rightarrow 0
\]
as $m \rightarrow \infty$. 
On the other hand, by the independence of $\Hz$ and $\F_1^W$, 
\begin{align*}
n E\Big[F_m \int_0^1 \int_{\eta_n(s)}^s \theta_r^{i} dr dW_s^i\Big|\F_1^W\Big] 
= \sum_{l=1}^m \Psi_{ml}^W n \int_0^1 \int_{\eta_n(s)}^s E[\Psi_{ml}^{\Hz} \theta_r^{i} |\F_r^W] dr dW_s^i. 
\end{align*}
Let $m$ be fixed. We can compute the limit of quadratic variations for $n \int_0^1 \int_{\eta_n(s)}^s \xi_r^{li} dr dW_s^i$ 
with $\xi_r^{li} := E[\Psi_{ml}^{\Hz} \theta_r^{i} |\F_r^W]$ 
as follows
\begin{align*}
n \Big\langle \int_0^1 \int_{\eta_n(s)}^s \xi_r^{li} dr dW_s^i, W_{\cdot}^{i'} \Big\rangle_t 
&= \delta_{ii'} n \int_0^t \int_{\eta_n(s)}^s \xi_r^{li} dr ds 
\\ & \rightarrow^P \frac{1}{2} \delta_{ii'} \int_0^t \xi_s^{li} ds, 
\\ n^2 \Big\langle \int_0^1 \int_{\eta_n(s)}^s \xi_r^{li} dr dW_s^i, \int_0^1 \int_{\eta_n(s)}^s \xi_r^{l'i'} dr dW_s^{i'} \Big\rangle_t 
 & = \delta_{ii'} n^2 \int_0^t \Big(\int_{\eta_n(s)}^s \xi_r^{li}dr \int_{\eta_n(s)}^s \xi_r^{l'i'}dr \Big) ds 
\\ & \rightarrow^P \frac{1}{3} \delta_{ii'} \int_0^t \xi_s^{li}\xi_s^{l'i'}ds. 
\end{align*}
Then we obtain the following result of stable convergence from the martingale limit theorem in \cite{J97}: 
\begin{align*}
& \Big(n E\Big[F_m \int_0^1 \int_{\eta_n(s)}^s \theta_r^{i} dr dW_s^i\Big|\F_1^W\Big] \Big)_{1 \leq i \leq d}
\\ & \Longrightarrow^{s-\F} 
\Big( \sum_{l=1}^m \Psi_{ml}^W 
\Big( \frac{1}{2}\int_0^1 E[\Psi_{ml}^{\Hz} \theta_s^{i} |\F_s^W] dW_s^i 
+ \frac{1}{\sqrt{12}} \int_0^1 E[\Psi_{ml}^{\Hz} \theta_s^{i} |\F_s^W] dN_s^i \Big) \Big)_{1 \leq i \leq d}.
\end{align*}
Since $\int_0^1 E[\Psi_{ml}^{\Hz} \theta_s^{i} |\F_s^W] dW_s^i = E[\Psi_{ml}^{\Hz} \int_0^1 \theta_s^{i} dW_s^i|\F_1^W]$, the limit coincides with 
\begin{align*}
\Big(
\frac{1}{2}E\Big[F_m \int_0^1 \theta_s^{i} dW_s^i \Big|\F_1^W\Big] 
+ \frac{1}{\sqrt{12}} \int_0^1 E[F_m \theta_s^{i} |\F_1^W] dN_s^i
\Big)_{1 \leq i \leq d}.
\end{align*}
As $m \rightarrow \infty$, for each $i$, this converges (in $L^1$) to 
\[
\frac{1}{2}E\Big[F \int_0^1 \theta_s^{i} dW_s^i \Big|\F_1^W\Big] 
+ \frac{1}{\sqrt{12}} \int_0^1 E[F \theta_s^{i} |\F_1^W] dN_s^i.
\]
Therefore the proof follows from Lemma \ref{lem:sublimit}. 
\end{proof}

\section{The Euler method for nonlinear filtering}\label{sec:applications}

The goal of this section is to illustrate 
how Theorem \ref{thm:limittheorem} and \ref{thm:zerolimit2} can be applied to 
the error analysis for the Euler method for continuous-time nonlinear filtering theory. 
We shall begin with a basic formulation of the problem. 

\subsection{Settings}
Let $(\Omega, \F, \ff, P)$ be a filtered probability space 
and $\ff=(\F_t)_{t\in[0,1]}$ satisfies the usual conditions. 
Consider two stochastic processes $(X_t)_{t \in [0,1]}$ (often called the signal process) and 
$(Y_t)_{t\in[0,1]}$ (called observation process) defined as the solution of 
$e$- and $d$-dimensional stochastic differential equation 
\begin{align*}
X_t &= X_0 + \int_0^t b(X_s, Y_s)ds + \int_0^t\sigma(X_s,Y_s)dB_s + \int_0^t v(X_s,Y_s)dY_s
\\
&= X_0 + \int_0^t (b(X_s, Y_s)+v(X_s,Y_s)h(X_s,Y_s))ds + \int_0^t\sigma(X_s,Y_s)dB_s + \int_0^t v(X_s,Y_s)dW_s,
\\
Y_t &= \int_0^t h(X_s, Y_s)ds + W_t
\end{align*}
where $X_0$ is $\F_0$-measurable, 
and $B=(B_t)_{t\in[0,1]}$ and $W=(W_t)_{t\in[0,1]}$ stand for $e$- and $d$-dimensional $\ff$-Brownian motions 
independent of each other. 
The coefficients $b = (b_i)_{1 \leq i \leq e}: \R^e\times\R^d \rightarrow \R^e$, 
$\sigma = (\sigma_{ij})_{1 \leq i,j \leq e}: \R^e\times\R^d \rightarrow \R^e\otimes\R^e$, 
$v = (v_{ik})_{1 \leq i\leq e, 1 \leq k \leq d}: \R^e\times\R^d \rightarrow \R^e\otimes\R^d$, 
$h = (h_i)_{1 \leq i \leq d}: \R^e\times\R^d \rightarrow \R^d$ 
are assumed to be Lipschitz continuous. 

We are interested in the practical issues of nonlinear filtering, in particular, 
the computational problem for the conditional expectation $E[g(X_1)|\F_1^Y]$. 
One of the approaches to the problem is the change of measure approach (see e.g.\ \cite{BC09}, \cite{K11}). 
We set 
\[
\Phi_t := \exp\Big( \int_0^t h(X_{s}, Y_{s}) dY_s 
- \frac{1}{2} \int_0^t |h|^2(X_{s}, Y_{s})ds \Big) 
\]
and assume $E[\Phi_1^{-1}]=1$. 
Then, using Girsanov's theorem, we can consider 
the change of measure via the density $\frac{d\tilde{P}}{dP} = \Phi_1^{-1}$. 
That is, the new probability measure is defined as 
$\tilde{P}(A) = E[1_A \Phi_1^{-1}]$ for $A \in \F$. 
Notice that, for the random variables $Z_n$ $(n\in\N)$ and $Z$, 
$Z_n \rightarrow^P Z$ and $Z_n \rightarrow^{\tilde{P}} Z$ are equivalent. 
Under the measure $\tilde{P}$, 
$B$ and $Y$ are $e$- and $d$-dimensional $\ff$-Brownian motions, 
and hence $X_0, B, Y$ are independent. 
After the change of measure, we get the abstract Bayes formula (Kallianpur-Striebel formula)
\[
E[g(X_1)|\F_1^Y] = \frac{\tilde{E}[g(X_1)\Phi_1|\F_1^Y] }{\tilde{E}[\Phi_1|\F_1^Y]}.
\]
See e.g.\ \cite{K11} for the details. 
The advantage of this change of measure is that under $\tilde{P}$, 
the random variables inside the conditional expectation 
can be expressed as a functional of two Brownian motions $(B,Y)$. 
Hence one can construct the discrete-time approximation scheme for $\tilde{E}[g(X_1)\Phi_1|\F_1^Y]$ 
and give the precise error estimate for it using It\^o's stochastic calculus with respect to $(B,Y)$. 
In the next two subsections, 
the Euler method for $\tilde{E}[g(X_1)\Phi_1|\F_1^Y]$ is discussed. 

For simplicity, we often use the following notations:
\begin{itemize}
\item 
The summation over all indices (of possible states) 
is denoted by $\sum_i$, $\sum_{i,j}$, etc. 
\item 
For a smooth function $f(x,y)$ ($x \in \R^e$, $y\in\R^d$), 
$\nabla_x f$ and $\nabla_yf$ denote the gradient vector fields of the variables $x$ and $y$, respectively. 
Similarly, $\partial_{x_k}f$ and $\partial_{y_k}f$ are defined as $k$-th partial derivatives 
with respect to the variables $x$ and $y$, respectively. 
\end{itemize}

\subsection{Asymptotic error distribution of the Euler method I}\label{sebsec:picard}
First of all, we consider the Euler-type approximation for 
$\log(\Phi_1)$, that is, 
\[
\bar{\Phi}_t^n := \exp\Big( \int_0^t h(X_{\eta_n(s)}, Y_{\eta_n(s)}) dY_s 
- \frac{1}{2} \int_0^t |h|^2(X_{\eta_n(s)}, Y_{\eta_n(s)})ds \Big).
\]
If the finite-dimensional distribution of $(X_t)_{t\in[0,1]}$ is known exactly (under $\tilde{P}(\cdot|\F_1^Y)$), 
we can compute the approximate value $\tilde{E}[g(X_1)\bar{\Phi}_1^n|\F_1^Y]$ 
by Monte Carlo methods. 

In what follows, we analyze the asymptotic distribution of the unnormalized error 
\[
\tilde{E}[g(X_1) \Phi_1|\F_1^Y] - \tilde{E}[g(X_1)\bar{\Phi}_1^n |\F_1^Y]. 
\]
Subsequently, we also deduce the asymptotic distribution of the normalized error 
\[
E[ g(X_1)|\F_1^Y] - \frac{\tilde{E}[ g(X_1) \bar{\Phi}_1^n |\F_1^Y]}{\tilde{E}[\bar{\Phi}_1^n |\F_1^Y]}. 
\]
In order to prove several properties, the following three conditions are considered.
\begin{itemize}
\item[($A_{1}$)] : $b, \sigma, v$: Lipschitz continuous. $X_0 \in \cap_{p\geq 1} L^p$.
\item[($A_{2}$)] : $h \in C_b^1(\R^e \times \R^d; \R^d)$. 
\item[($A_{3}$)] : $g$ is a measurable function with polynomial growth. 
\end{itemize}

As an important application of Theorem \ref{thm:limittheorem}, 
we can show the following limit theorem for the unnormalized error under $\tilde{P}$ 
and the normalized error under the original measure $P$. 
If the conditional variance of the limit is not zero, 
the results imply that the order of strong convergence of these errors is at most $1/2$. 
Combining this and Proposition \ref{prop:orderestimates} described later, 
we can say that the order $1/2$ is optimal, 
except some specific cases (see e.g.~\cite{P84}, \cite{Ta86}, \cite{MT09}). 

\begin{theorem}\label{thm:app1}
Assume $(A_{1})$-$(A_{3})$. Then the following results hold. 
\begin{description}
\item[(i)] Under the probability measure $\tilde{P}$, 
\begin{align*}
\sqrt{n}\left(\tilde{E}[g(X_1) \Phi_1|\F_1^Y] - \tilde{E}[g(X_1)\bar{\Phi}_1^n |\F_1^Y]\right)
\Longrightarrow^{s-\F} 
Z \sqrt{\frac{1}{2}\sum_{1\leq i,j\leq d} \int_0^1 |u_s^{ij}(g)|^2 ds}
\end{align*}
where
\begin{align*}
u_s^{ij}(g) = \tilde{E}\Big[g(X_1)\Big(\sum_{k}\partial_{x_k}h_i(X_s, Y_s)v_{kj}(X_s,Y_s) 
+ \partial_{y_j} h_i(X_s, Y_s)\Big)\Phi_1 \Big|\F_1^Y\Big]
\end{align*}
and $Z$ is a standard normal random variable independent of $\F$. 
\item[(ii)] Under the original probability measure $P$, 
\begin{align*}
\sqrt{n}\left( E[ g(X_1)|\F_1^Y] - \frac{\tilde{E}[ g(X_1) \bar{\Phi}_1^n |\F_1^Y]}{\tilde{E}[\bar{\Phi}_1^n |\F_1^Y]} \right) 
\quad \Longrightarrow^{s-\F} 
Z 
\sqrt{\frac{1}{2}\sum_{1\leq i,j\leq d} \int_0^1 |\mu_s^{ij}(g)|^2 ds }
\end{align*}
where 
\begin{align*} 
\mu_s^{ij}(g) &= E\Big[g(X_1)\Big(\sum_{k}\partial_{x_k}h_i(X_s, Y_s)v_{kj}(X_s,Y_s) + \partial_{y_j}h_i(X_s, Y_s)\Big)\Big|\F_1^Y\Big] 
\\ & \quad - E[g(X_1)|\F_1^Y]E\Big[\sum_{k}\partial_{x_k}h_i(X_s, Y_s)v_{kj}(X_s,Y_s)+\partial_{y_j}h_i(X_s, Y_s)\Big|\F_1^Y\Big]
\end{align*}
and $Z$ is a standard normal random variable independent of $\F$. 
\end{description}
\end{theorem}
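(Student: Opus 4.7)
The strategy for part (i) is to reduce $\sqrt{n}\,\tilde{E}[g(X_1)(\Phi_1 - \bar{\Phi}_1^n) \mid \F_1^Y]$, via a chain of approximations, to a sum of conditional expectations of iterated $dY \otimes dY$ integrals of the exact form treated by Corollary \ref{rem:important}. Under $\tilde{P}$, both $\Phi$ and $\bar{\Phi}^n$ are stochastic exponentials driven by $Y$, so their difference satisfies a closed linear SDE. Solving it by the stochastic-exponential integrating-factor method, I first establish the approximation
\[
\Phi_1 - \bar{\Phi}_1^n = \Phi_1\sum_i \int_0^1 \bigl(h_i(X_s, Y_s) - h_i(X_{\eta_n(s)}, Y_{\eta_n(s)})\bigr) dY_s^i + r_n,
\]
with $\|r_n\|_{L^2(\tilde{P})} = o(n^{-1/2})$. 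This uses the uniform $L^p$-boundedness of $\Phi_s$, $\bar{\Phi}_s^n$ (from boundedness of $h$) together with $\sup_s \|\bar{\Phi}_s^n - \Phi_s\|_{L^p} = O(n^{-1/2})$ from a Gr\"onwall argument. Critically, pulling $\Phi_1$ outside the integral in this way is what produces $\Phi_1$ (rather than $\Phi_s$) in the final variance formula.

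Second, the first-order Taylor expansion of $h_i$ about $(X_s, Y_s)$ together with the SDE $dX_r^k = b_k(X_r,Y_r) dr + \sum_l \sigma_{kl} dB_r^l + \sum_j v_{kj} dY_r^j$ rewrites $h_i(X_s, Y_s) - h_i(X_{\eta_n(s)}, Y_{\eta_n(s)})$ as a sum of inner integrals $\int_{\eta_n(s)}^s \cdots d\zeta_r$ with $\zeta_r \in \{r,\, B^l,\, Y^j\}$. After multiplying by $g(X_1)$, conditioning on $\F_1^Y$, and scaling by $\sqrt{n}$, the contributions split into three kinds: iterated $dY_r^j dY_s^i$ integrals, treated by Corollary \ref{rem:important} with $F = g(X_1)\Phi_1$, $\theta_r \in \{v_{kj}(X_r,Y_r),\,1\}$, $\lambda_s \in \{\partial_{x_k} h_i(X_s,Y_s),\, \partial_{y_j} h_i(X_s,Y_s)\}$; iterated $dB_r^l dY_s^i$ and $dr\,dY_s^i$ integrals, annihilated by Theorem \ref{thm:zerolimit2}; and Taylor-remainder plus $\nabla h$-evaluation-exchange (between $s$ and $r$) terms, each $o_p(n^{-1/2})$ in $L^1$ by continuity and boundedness of $\nabla h$. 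Summing the surviving limits yields $\sum_{i,j}\frac{1}{\sqrt{2}}\int_0^1 u_s^{ij}(g)\,dN_s^{ij}$, which, conditionally on $\F$, is centered Gaussian with variance $\frac{1}{2}\sum_{i,j}\int_0^1 |u_s^{ij}(g)|^2\,ds$, hence equal in law to $Z\sqrt{\tfrac12\sum_{i,j}\int_0^1 |u_s^{ij}(g)|^2\,ds}$ with $Z$ standard normal independent of $\F$.

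For part (ii), set $A = \tilde{E}[g(X_1)\Phi_1 \mid \F_1^Y]$, $B = \tilde{E}[\Phi_1 \mid \F_1^Y]$ and $A_n, B_n$ analogous with $\bar{\Phi}_1^n$. The algebraic identity
\[
\frac{A}{B} - \frac{A_n}{B_n} = \frac{1}{B}\,\tilde{E}\bigl[(g(X_1) - \pi_1(g))(\Phi_1 - \bar{\Phi}_1^n) \,\big|\, \F_1^Y\bigr] + R_n,
\]
with $R_n = (A/B - A_n/B_n)(B - B_n)/B_n$ of order $o_p(n^{-1/2})$ (since $B - B_n = O_p(n^{-1/2})$ by part (i) applied to $g \equiv 1$ and $B_n \to B > 0$), reduces the problem to part (i) with $g(X_1) - \pi_1(g)$ in place of $g(X_1)$, legitimate because $\pi_1(g)$ is $\F_1^Y$-measurable and can be pulled inside the conditional expectation. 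Dividing the resulting integrand by $B$ and invoking the Kallianpur--Striebel formula $E[\,\cdot\,\mid \F_1^Y] = \tilde{E}[\,\cdot\,\Phi_1 \mid \F_1^Y]/B$ converts $u_s^{ij}(g - \pi_1(g))/B$ into precisely $\mu_s^{ij}(g)$. Stable convergence transfers from $\tilde{P}$ to $P$ by equivalence of measures, as noted in the preliminaries.

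The hard part will be the $L^2$-estimate on $r_n$: the naive bound via Itô isometry and $\|\bar{\Phi}_s^n - \Phi_s\|_2 = O(n^{-1/2})$ only gives $\|r_n\|_2 = O(n^{-1/2})$, whereas the argument needs a strict little-$o$ rate. Obtaining this requires decomposing $r_n$ using the integrating-factor representation of $\Phi_1 - \bar{\Phi}_1^n$ and then exploiting that the integrand $h(X_s,Y_s) - h(X_{\eta_n(s)},Y_{\eta_n(s)})$ is itself $O(n^{-1/2})$ in $L^p$, so that $r_n$ effectively contains a \emph{product} of two factors each of size $O(n^{-1/2})$, yielding the required gain after Itô isometry.
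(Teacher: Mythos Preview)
Your overall architecture is sound and close to the paper's, but the linearization step in (i) differs, and your justification for it has a gap.

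\medskip
\textbf{Part (i): different linearization, incomplete remainder bound.}
The paper does not solve the linear SDE for $\Phi_1-\bar\Phi_1^n$. Instead it applies the one–variable mean value theorem to $x\mapsto e^x$ between $\log\Phi_1$ and $\log\bar\Phi_1^n$, obtaining
\[
\Phi_1-\bar\Phi_1^n=\bar\Gamma^n\,J^n,\qquad J^n=K^n-\tfrac12 L^n,
\]
with $K^n=\int_0^1(h-h_{\eta_n})\,dY$, $L^n=\int_0^1(|h|^2-|h_{\eta_n}|^2)\,ds$, and $\bar\Gamma^n\to^{\tilde P}\Phi_1$. Then $\bar\Gamma^n$ is replaced by $\Phi_1$ inside $\tilde E[g(X_1)\bar\Gamma^n\sqrt n\,J^n\,|\,\F_1^Y]$ using only $\|\bar\Gamma^n-\Phi_1\|_4\to 0$ and $\sup_n\|\sqrt n\,J^n\|_2<\infty$; no $o(n^{-1/2})$ rate on any remainder is needed. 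The $L^n$ piece is then killed \emph{after} conditioning, via Theorem~\ref{thm:zerolimit2}.

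Your integrating–factor route gives
\[
\Phi_1-\bar\Phi_1^n=\Phi_1\!\int_0^1\!\Phi_s^{-1}\bar\Phi_s^n(h_s-h_{\eta_n(s)})\bigl(dY_s-h_s\,ds\bigr),
\]
so
\[
r_n=\Phi_1\!\int_0^1\!(\Phi_s^{-1}\bar\Phi_s^n-1)(h_s-h_{\eta_n(s)})\,dY_s
\;-\;\Phi_1\!\int_0^1\!\Phi_s^{-1}\bar\Phi_s^n(h_s-h_{\eta_n(s)})\,h_s\,ds.
\]
Your ``product of two $O(n^{-1/2})$ factors $+$ It\^o isometry'' argument handles the first term, but not the second: the cross–variation $ds$ integral has integrand $(h_s-h_{\eta_n(s)})h_s$ with only \emph{one} small factor, and It\^o isometry does not apply to a $ds$ integral. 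The naive bound gives only $O(n^{-1/2})$. The claim $\|r_n\|_2=o(n^{-1/2})$ is in fact true, but for a different reason: after expanding $h_s-h_{\eta_n(s)}$ and freezing the outer integrand at $\eta_n(s)$, the $ds$ term becomes a sum of martingale increments $\sum_k\phi_{k/n}\int_{k/n}^{(k+1)/n}\!\int_{k/n}^s\theta_r\,dM_r\,ds$ with conditional variance $O(n^{-3})$ each, hence $L^2$ norm $O(n^{-1})$ (this is essentially Lemma~\ref{lem:zero_conv}). You need to supply this argument. Alternatively, and more simply, do as the paper does: keep the $ds$ term in the main expansion and let Theorem~\ref{thm:zerolimit2} dispose of it after conditioning, rather than forcing an unconditional $L^2$ rate on $r_n$.

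Once the leading term is isolated, your use of Corollary~\ref{rem:important} with $\lambda_s=\partial h_i(X_s,Y_s)$ is equivalent to the paper's use of Theorem~\ref{thm:limittheorem} after absorbing $\lambda$ into $\theta$; both are fine under $(A_1)$--$(A_3)$.

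\medskip
\textbf{Part (ii).} Your decomposition is algebraically equivalent to the paper's
\[
\frac{\rho_1(g)}{\rho_1(\mathbf 1)}-\frac{\bar\rho_1^n(g)}{\bar\rho_1^n(\mathbf 1)}
=\frac{1}{\rho_1(\mathbf 1)}\bigl(\rho_1(g)-\bar\rho_1^n(g)\bigr)
-\frac{\bar\rho_1^n(g)}{\rho_1(\mathbf 1)\bar\rho_1^n(\mathbf 1)}\bigl(\rho_1(\mathbf 1)-\bar\rho_1^n(\mathbf 1)\bigr),
\]
followed by the joint stable convergence of the pair (Theorem~\ref{thm:limittheorem} with $q=2$). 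Your remainder should read $R_n=\Delta\,(B-B_n)/B$ rather than $\Delta\,(B-B_n)/B_n$, but this is harmless since both are $o_P(n^{-1/2})$. Note also that ``part (i) with $g(X_1)-\pi_1(g)$'' is not literally an instance of (i) ($\pi_1(g)$ is not a function of $X_1$); what you really use is the joint limit for $g$ and for $g\equiv 1$, then combine linearly---exactly as the paper does.
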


\subsection{Asymptotic error distribution of the Euler method II}\label{sebsec:general_euler}

We now consider the Euler-Maruyama scheme for the stochastic differential equation $X = (X_t)_{t\in[0,1]}$. 
The scheme $\bar{X}^n = (\bar{X}_t^n)_{t\in [0,1]}$ is defined by 
\begin{align*}
\bar{X}_t^n = X_0 + \int_0^t b(\bar{X}_{\eta_n(s)}^n, Y_{\eta_n(s)})ds 
+ \int_0^t\sigma(\bar{X}_{\eta_n(s)}^n, Y_{\eta_n(s)})dB_s + \int_0^t v(\bar{X}_{\eta_n(s)}^n, Y_{\eta_n(s)})dY_s.
\end{align*}
We redefine here the approximation of $\Phi_1$ as follows. For any continuous processes $U$ and $V$, 
\[
\bar{\Phi}_t^n(U,V) := \exp\Big( \int_0^t h(U_{\eta_n(s)}, V_{\eta_n(s)}) dY_s 
- \frac{1}{2} \int_0^t |h|^2(U_{\eta_n(s)}, V_{\eta_n(s)})ds \Big).
\]
In what follows, the pair $(\bar{X}^n, \bar{\Phi}_1^n(\bar{X}^n, Y))$ is used 
as an approximation for $(X, \Phi_1)$. 
Note that $\bar{\Phi}_t^n(X,Y) = \bar{\Phi}_t^n$ in the previous. 

The conditional expectation $\tilde{E}[g(\bar{X}_1^n)\bar{\Phi}_1^n(\bar{X}^n, Y) |\F_1^Y]$ 
is implementable by Monte Calro method for $\bar{X}_1^n$. 
Indeed, both $\bar{X}_1^n$ and $\bar{\Phi}_1^n(\bar{X}^n, Y)$ 
are functionals of $X_0$, $\{B_{(i+1)/n}-B_{i/n}\}_{0\leq i \leq n-1}$, 
$\{Y_{(i+1)/n}-Y_{i/n}\}_{0\leq i \leq n-1}$. 
This implies that
\[
\tilde{E}[g(\bar{X}_1^n)\bar{\Phi}_1^n(\bar{X}^n, Y) |\F_1^Y] 
= \tilde{E}[g(\bar{X}_1^n)\bar{\Phi}_1^n(\bar{X}^n, Y) | \{Y_{(i+1)/n}-Y_{i/n}\}_{0\leq i \leq n-1} ]. 
\]
The right hand side is the expectaion of a functional of $\bar{X}^n$ 
under a given descrete observation $\{Y_{i/n}\}_{1\leq i \leq n}$. 

In the subsection, some additional smoothness assumptions for the coefficients are required. 
\begin{itemize}
\item[($A_{1}'$)] : $b, \sigma, v$: $C^1$ with bounded partial derivatives. $X_0 \in \cap_{p\geq 1} L^p$.
\item[($A_{2}'$)] : $h \in C_b^1(\R^e \times \R^d; \R^d)$. 
\item[($A_{3}'$)] : $g \in C^1$ such that $g$ and $\nabla g$ are polynomial growth. 
\end{itemize}

We finally show a similar version of Theorem \ref{thm:app1} together with the approximation to $(X_t)_{t\in[0,1]}$. 
\begin{theorem}\label{thm:app2}
Under $(A_{1}')$-$(A_{3}')$, let $\{ (\mathcal{E}_t^{ij})_{t \in [0,1]}, 1\leq i,j\leq e+1\}$ be the solution to 
the following linear stochastic differential equation: for $1 \leq i\leq e$, 
\begin{align*}
\mathcal{E}_t^{ij} = \delta_{ij} + \sum_{k} \int_0^t \partial_{x_k} b_i(X_s,Y_s) \mathcal{E}_s^{kj} ds 
+ \sum_{k,l} \int_0^t \partial_{x_k} \sigma_{i,l}(X_s,Y_s) \mathcal{E}_s^{kj} dB_s^l 
+ \sum_{k,l} \int_0^t \partial_{x_k} v_{i,l}(X_s,Y_s) \mathcal{E}_s^{kj} dY_s^l, 
\end{align*}
and for $i=e+1$, 
\begin{align*}
\mathcal{E}_t^{ij} = \delta_{ij} 
+ \sum_{k,l} \int_0^t \partial_{x_k} h_{l}(X_s,Y_s) \mathcal{E}_s^{kj} dY_s^l
+ \sum_{k} \int_0^t \partial_{x_k} |h|^2 (X_s,Y_s) \mathcal{E}_s^{kj} ds. 
\end{align*}
Then the following results hold. 

\begin{description}
\item[{\rm (i)}] Under the probability measure $\tilde{P}$,
\begin{align*}
\sqrt{n}\left(\tilde{E}[g(X_1) \Phi_1|\F_1^Y] - \tilde{E}[g(\bar{X}_1^n)\bar{\Phi}_1^n(\bar{X}^n, Y) |\F_1^Y]\right)
\quad \Longrightarrow^{s-\F} 
Z \sqrt{\frac{1}{2}\sum_{1\leq i,j\leq d} \int_0^1 |u_s^{ij}(g)|^2 ds}, 
\end{align*}
where 
\begin{align*} 
u_s^{ij}(g) & = 
\tilde{E}\Big[\sum_{k',k''} \Big( \sum_{1\leq k \leq e}\partial_{k}g(X_1) \mathcal{E}_1^{kk'} + \mathcal{E}_1^{(e+1)k'}\Big) 
(\mathcal{E}_s^{-1})^{k'k''} f_s^{ijk''} \Phi_1 \Big|\F_1^Y\Big], 
\end{align*}
$f_s^{ijk''}$ is given by 
\begin{align*}
f_s^{ijk''} = 
\left\{ \begin{array}{ll}
    \sum_{l} \partial_{x_l}v_{k''i}(X_s,Y_s)v_{lj}(X_s,Y_s) + \partial_{y_j}v_{k''i}(X_s,Y_s), & k'' = 1, \dots, e, \\
    \sum_{l}\partial_{x_l}h_i(X_s, Y_s)v_{lj}(X_s,Y_s) + \partial_{y_j}h_i(X_s, Y_s), & k'' = e+1
  \end{array} \right.
\end{align*}
and $Z$ is a standard normal random variable independent of $\F$. 

\item[{\rm (ii)}] Under the original probability measure $P$, 
\begin{align*}
\sqrt{n}\left( E[ g(X_1)|\F_1^Y] 
- \frac{\tilde{E}[ g(\bar{X}_1^n) \bar{\Phi}_1^n(\bar{X}^n, Y) |\F_1^Y]}{\tilde{E}[\bar{\Phi}_1^n(\bar{X}^n, Y) |\F_1^Y]} \right) 
\quad \Longrightarrow^{s-\F} 
Z \sqrt{\frac{1}{2}\sum_{1\leq i,j\leq d} \int_0^1 |\mu_s^{ij}(g)|^2 ds }, 
\end{align*}
where 
\begin{align*}
\mu_s(g) &= 
E\Big[\sum_{k',k''} \Big( \sum_{1\leq k \leq e}\partial_{k}g(X_1) \mathcal{E}_1^{kk'} + \mathcal{E}_1^{(e+1)k'}\Big)
(\mathcal{E}_s^{-1})^{k'k''} f_s^{ijk''} \Big|\F_1^Y\Big]
\\ & \quad + E[g(X_1)|\F_1^Y] 
E\Big[\sum_{k',k''} \mathcal{E}_1^{(e+1)k'}(\mathcal{E}_s^{-1})^{k'k''} f_s^{ijk''} \Big|\F_1^Y\Big]
\end{align*}
and $Z$ is a standard normal random variable independent of $\F$. 
\end{description}

\begin{remark}
We see that $\mu_s = 0$ if 
\begin{align*}
\partial_y h(x,y) &= 0, 
\\ v(x,y) &= 0. 
\end{align*}
Additionally, if we assume $\partial_y b(x,y) = 0$ and $\partial_y \sigma(x,y) = 0$, 
then we get the following standard model in filtering theory 
\begin{align*}
X_t &= X_0 + \int_0^t b(X_s)ds + \int_0^t \sigma(X_s)dB_s 
\\ 
Y_t &= \int_0^t h(X_s)ds + W_t. 
\end{align*}
The asymptotic error distribution of the Euler method for this case (with order $1/n$) has not been analyzed. 
\end{remark}
\end{theorem}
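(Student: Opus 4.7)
The plan is to mirror the strategy used for Theorem~\ref{thm:app1} while propagating local Euler errors through the combined $(e+1)$-dimensional variational flow $\mathcal{E}$, whose $(e+1)$-st row is designed to carry the Girsanov-density contribution. First I would regard $(X_1, \Phi_1)$ and $(\bar{X}_1^n, \bar{\Phi}_1^n(\bar{X}^n, Y))$ as functionals of the pair $(B, Y)$, which are $\tilde{P}$-Brownian, and apply an Alekseev--Gr\"obner-type variation-of-constants identity to get
\[
g(X_1)\Phi_1 - g(\bar{X}_1^n)\bar{\Phi}_1^n(\bar{X}^n, Y) = \int_0^1 \sum_{k',k''} \Xi_s^{k'k''}\,dL_s^{n,k''} + R_n,
\]
where $L^n$ is the cumulative one-step local error of the combined $(e+1)$-dimensional scheme, $R_n$ is a higher-order remainder, and the integrand factor
$\Xi_s^{k'k''} = \Phi_1\bigl(\sum_{1\leq k\leq e}\partial_k g(X_1)\mathcal{E}_1^{kk'} + \mathcal{E}_1^{(e+1)k'}\bigr)(\mathcal{E}_s^{-1})^{k'k''}$
reproduces exactly the prefactor-plus-propagator combination appearing in $u_s^{ij}(g)$.

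Next, I would expand $dL_s^{n,k''}$ by one further It\^o--Taylor step. For the $v$-coefficient in the $X$-equation (and analogously for the $h$- and $|h|^2$-contributions that drive the $(e+1)$-st coordinate of $L^n$),
\[
v(X_s, Y_s) - v(\bar{X}_{\eta_n(s)}^n, Y_{\eta_n(s)}) = \nabla_x v(X_s, Y_s)\int_{\eta_n(s)}^s v(X_r, Y_r)\,dY_r + \nabla_y v(X_s, Y_s)\int_{\eta_n(s)}^s dY_r + (\text{smaller order}).
\]
After integration against $dY_s$ and rearrangement, the surviving order-$n^{-1/2}$ terms are exactly the iterated integrals $\int_0^1 \bigl(\int_{\eta_n(s)}^s \theta_r\,dW_r^j\bigr)\lambda_s\,dW_s^i$ treated in Corollary~\ref{rem:important}, with the $\theta$-factor picking up the inner $v$- or $h$-coefficient and the $\lambda$-factor picking up the outer $(\mathcal{E}_s^{-1})^{k'k''}$ weight; matching indices then produces $f_s^{ijk''}$. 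Contributions containing any $B$-integration are killed by Theorem~\ref{thm:zerolimit2}, and mixed Riemann--It\^o cross terms by Lemma~\ref{lem:zero_conv}. Invoking Corollary~\ref{rem:important} on the remaining leading terms delivers the mixed-normal limit claimed in (i).

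Part (ii) follows from the algebraic decomposition (valid by Kallianpur--Striebel)
\[
E[g(X_1)|\F_1^Y] - \frac{\tilde{E}[g(\bar{X}_1^n)\bar{\Phi}_1^n|\F_1^Y]}{\tilde{E}[\bar{\Phi}_1^n|\F_1^Y]} = \frac{\tilde{E}[g(X_1)\Phi_1 - g(\bar{X}_1^n)\bar{\Phi}_1^n|\F_1^Y]}{\tilde{E}[\bar{\Phi}_1^n|\F_1^Y]} - E[g(X_1)|\F_1^Y]\cdot\frac{\tilde{E}[\Phi_1 - \bar{\Phi}_1^n|\F_1^Y]}{\tilde{E}[\bar{\Phi}_1^n|\F_1^Y]},
\]
by applying the \emph{joint} version of part (i) (with $q=2$ and $(g,1)$) to get simultaneous stable convergence of the two numerators, multiplying by the $P$-convergent factor $1/\tilde{E}[\bar{\Phi}_1^n|\F_1^Y]$, and transferring from $\tilde{P}$ to $P$ via the absolute-continuity bullet of Section~\ref{sec:pre}. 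The subtraction generates precisely the $-E[g(X_1)|\F_1^Y]\,E[\cdots]$ correction in $\mu_s^{ij}(g)$. The main obstacle will be the bookkeeping of remainders in the second step: verifying that every Taylor residue, every substitution of $\mathcal{E}_{\eta_n(s)}^{-1}$ by $\mathcal{E}_s^{-1}$, and every mixed Riemann--It\^o cross term contributes $o_P(n^{-1/2})$ in $L^1$. This rests on the $L^p$-bounds $\|X_t - \bar{X}_t^n\|_p = O(n^{-1/2})$ from Proposition~\ref{prop:lpestimate2}, analogous bounds on $\mathcal{E}$ and $\mathcal{E}^{-1}$ guaranteed by $(A_1')$--$(A_3')$, and repeated use of Lemma~\ref{lem:zero_conv}.
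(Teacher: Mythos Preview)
Your proposal is correct and follows essentially the same route as the paper. The paper packages your Alekseev--Gr\"obner step as two separate moves---a mean-value linearization of $\tilde g(x_1,\dots,x_{e+1}):=g(x_1,\dots,x_e)e^{x_{e+1}}$ applied to the augmented pair $(\tilde X_1,\hat X_1^n)$, followed by the linear variation-of-constants formula (Lemma~\ref{lem:linearsol}) for $\tilde X_t-\hat X_t^n$---and then splits the driving term $G$ into the four pieces $R^{k,n},R^{D,k,n},R^{B,k,n},R^{Y,k,n}$, which correspond exactly to your Taylor residue, $dt$-, $dB$-, and $dY$-contributions; the quadratic-covariation correction coming from Lemma~\ref{lem:linearsol} is handled separately as a third property and falls under what you call ``mixed Riemann--It\^o cross terms.'' The identification of the surviving $dY^j\,dY^i$ iterated integrals, the use of Theorem~\ref{thm:zerolimit2} to kill $B$-terms, the use of Corollary~\ref{rem:important} to absorb the $(\mathcal E_s^{-1})^{k'k''}$ weight, and the derivation of (ii) from the joint $q=2$ version of (i) are all exactly as you outline.
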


\subsection{Proof of Theorem \ref{thm:app1}}
Before giving the proof of the theorem, we prove several auxiliary results.

\begin{lemma}\label{lem:add1}
If $(A_1)$-$(A_2)$ holds, then 
for any $p \in \R$, 
\[\tilde{E}[|\Phi_1|^p] + \sup_{n} \tilde{E}[|\bar{\Phi}_1^n|^p] < \infty.
\]
\end{lemma}

\begin{proof}
Note that both $\Phi_t$ and $\bar{\Phi}_t^n$ are the solutions of linear stochastic diffrential equations 
whose coeficients are bounded. Then the result follows using Gronwall's inequality.
\end{proof}

\begin{lemma}\label{lem:add2}
If $(A_1)$-$(A_2)$ holds, then 
$\sup_{0\leq t \leq 1}|X_t| + \sup_{0\leq t \leq 1}|\bar{X}_t^n| \in L^p(P) \cap L^p(\tilde{P})$ for every $p \geq 1$
\end{lemma}

\begin{proof}
Since $b, \sigma, v$ are Lipschitz continuous, we know that 
$\sup_{0\leq t \leq 1}|X_t| + \sup_{0\leq t \leq 1}|\bar{X}_t^n| \in L^p(\tilde{P})$. 
The assertion $\sup_{0\leq t \leq 1}|X_t| + \sup_{0\leq t \leq 1}|\bar{X}_t^n| \in L^p(P)$ follows from 
the change of measure and H\"older's inequality. 
\end{proof}

\begin{lemma}\label{lem:phi_conv}
Under $(A_1)$-$(A_3)$ and $h \in C_b(\R^e \times \R^d; \R^d)$, we have 
\begin{description}
\item[1.] $\bar{\Phi}_1^n \rightarrow^P \Phi_1$.
\item[2.] $\tilde{E}[g(X_1)\bar{\Phi}_1^n|\F_1^Y] \rightarrow^{P} \tilde{E}[g(X_1)\Phi_1|\F_1^Y]$.
\end{description}
\end{lemma}
\begin{proof}
We will prove two results simultaneously. 
Since $h$ and $t \mapsto (X_t, Y_t)$ are continuous, then 
$\int_0^1|h(X_s,Y_s)-h(X_{\eta_n(s)},Y_{\eta_n(s)})|^2 ds \rightarrow 0$ almost surely. 
This also implies that $\int_0^1 h(X_{\eta_n(s)},Y_{\eta_n(s)})dY_s \rightarrow^P \int_0^1 h(X_s,Y_s)dY_s$. 
Hence we get $\bar{\Phi}_1^n \rightarrow^P \Phi_1$, and also 
$\tilde{E}[g(X_1)\bar{\Phi}_1^n|\F_1^Y] \rightarrow^{P} \tilde{E}[g(X_1)\Phi_1|\F_1^Y]$ 
by Lemma \ref{lem:add1}. 
\end{proof}

\begin{lemma}\label{lem:basicineq}
If $(A_1)$, $(A_3)$ holds and $\mu \in C^1(\R^e \times \R^d)$ with bounded partial derivatives,  
then for every $p \geq 1$, 
\[
n^{p/2} \sup_{0 \leq t \leq 1} \tilde{E}\Big[\big| 
\mu(X_t,Y_t) - \mu(X_{\eta_n(t)},Y_{\eta_n(t)}) - 
\int_{\eta_n(t)}^t ( \nabla_x \mu(X_s,Y_s) dX_s + \nabla_y \mu(X_s,Y_s)dY_s) 
\big|^{p}\Big] \rightarrow 0, 
\]
Moreover, 
\begin{align*}
\tilde{E}\Big[\int_0^1 |\mu(X_t,Y_t) - \mu(X_{\eta_n(t)},Y_{\eta_n(t)})|^{p} dt \Big] \leq \frac{C(p)}{n^{p/2}}.
\end{align*}
\end{lemma}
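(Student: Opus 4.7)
The plan for the second inequality is direct: under $(A_{1})$ and $X_0 \in \bigcap_{p \geq 1} L^p$, the Burkholder--Davis--Gundy estimate applied to the SDEs for $X$ and $Y$ (noting that under $\tilde{P}$ both $B$ and $Y$ are Brownian motions and the coefficients are Lipschitz) yields $\tilde{E}[|X_t - X_{\eta_n(t)}|^{p}] + \tilde{E}[|Y_t - Y_{\eta_n(t)}|^{p}] \leq C(p) n^{-p/2}$, uniformly in $t \in [0,1]$. Combining this with the global Lipschitz property of $\mu$ (from the boundedness of $\nabla \mu$) yields the stated bound immediately.

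For the first assertion the plan is to decompose the quantity inside the expectation as $R_1^n(t) + R_2^n(t)$, where
\begin{align*}
R_1^n(t) &= \int_0^1 \bigl[\nabla \mu(\Xi_u^n(t)) - \nabla \mu(X_{\eta_n(t)}, Y_{\eta_n(t)})\bigr] \cdot \bigl((X_t,Y_t) - (X_{\eta_n(t)}, Y_{\eta_n(t)})\bigr)\, du, \\
R_2^n(t) &= \int_{\eta_n(t)}^{t} \bigl[\nabla_x \mu(X_{\eta_n(t)}, Y_{\eta_n(t)}) - \nabla_x \mu(X_s, Y_s)\bigr] dX_s \\
&\quad + \int_{\eta_n(t)}^{t} \bigl[\nabla_y \mu(X_{\eta_n(t)}, Y_{\eta_n(t)}) - \nabla_y \mu(X_s, Y_s)\bigr] dY_s,
\end{align*}
with $\Xi_u^n(t) := (1-u)(X_{\eta_n(t)}, Y_{\eta_n(t)}) + u(X_t, Y_t)$. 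This comes from Taylor's theorem with integral remainder applied to $\mu(X_t,Y_t) - \mu(X_{\eta_n(t)},Y_{\eta_n(t)})$, followed by rewriting the frozen first-order Taylor term $\nabla \mu(X_{\eta_n(t)},Y_{\eta_n(t)}) \cdot ((X_t,Y_t) - (X_{\eta_n(t)}, Y_{\eta_n(t)}))$ as a stochastic integral of the frozen integrand and absorbing the resulting mismatch into $R_2^n$.

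For $R_1^n$, on the localising event $A_M := \{\sup_{s \leq 1}(|X_s| + |Y_s|) \leq M\}$ the function $\nabla \mu$ is uniformly continuous on the relevant bounded region with some modulus $\omega_M$, so $|R_1^n(t)| \leq \omega_M(\Delta_n(t))\,\Delta_n(t)$ there, where $\Delta_n(t) := |X_t - X_{\eta_n(t)}| + |Y_t - Y_{\eta_n(t)}|$. I would split $\tilde{E}[|R_1^n(t)|^{p}]$ according to $\{\Delta_n(t) \leq \delta\} \cap A_M$, $\{\Delta_n(t) > \delta\} \cap A_M$ and $A_M^c$, using Chebyshev with a higher moment of $\Delta_n(t)$ on the second piece and the moment bound on $(X,Y)$ together with $\tilde{P}(A_M^c) \to 0$ on the third. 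This leaves a main-order term bounded by $C \omega_M(\delta)^{p} n^{-p/2}$, and letting first $\delta \downarrow 0$ then $M \uparrow \infty$ gives $o(n^{-p/2})$ uniformly in $t$. For $R_2^n$, applying Burkholder--Davis--Gundy to the martingale parts of $dX$ and $dY$ and Hölder to the drift parts reduces $\tilde{E}[|R_2^n(t)|^{p}]$ to a multiple of $(t - \eta_n(t))^{p/2 - 1} \int_{\eta_n(t)}^{t} \tilde{E}[|\nabla \mu(X_{\eta_n(t)}, Y_{\eta_n(t)}) - \nabla \mu(X_s,Y_s)|^{p}]\,ds$ plus analogous drift contributions, and the same modulus-of-continuity argument then delivers the $o(n^{-p/2})$ bound.

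The main obstacle is the sharpening from the trivial $O(n^{-p/2})$ (which is immediate from $\nabla \mu \in C_b$) to the claimed $o(n^{-p/2})$: achieving this genuinely requires exploiting the continuity, not merely the boundedness, of $\nabla \mu$. The delicate step is organising the truncations at levels $\delta$ and $M$ so that the ``bad-event'' contributions decay strictly faster than $n^{-p/2}$ (for which the full strength of $X_0 \in \bigcap_p L^p$ and the higher moments of the SDE increments is used), while the main term carries the vanishing prefactor $\omega_M(\delta)^p$, uniformly in $t \in [0,1]$.
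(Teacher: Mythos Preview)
Your proposal is correct and follows essentially the same decomposition as the paper: both split the error into (i) a term of the form $(\nabla\mu(\text{intermediate point}) - \nabla\mu(X_{\eta_n(t)},Y_{\eta_n(t)}))\cdot((X_t,Y_t)-(X_{\eta_n(t)},Y_{\eta_n(t)}))$ (the paper via the mean value theorem, you via the integral-remainder form of Taylor) and (ii) the stochastic integral $\int_{\eta_n(t)}^t(\nabla\mu(X_{\eta_n(t)},Y_{\eta_n(t)})-\nabla\mu(X_s,Y_s))\,d(X_s,Y_s)$, then exploit the \emph{continuity} of $\nabla\mu$ to upgrade the trivial $O(n^{-p/2})$ to $o(n^{-p/2})$. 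The paper's execution of that last step is shorter than your $\delta/M$ truncation: since $\nabla\mu$ is bounded and the paths of $(X,Y)$ are continuous, one has $\sup_{0\le t\le 1}|\nabla\mu(\xi_n(t)) - \nabla\mu(X_{\eta_n(t)},Y_{\eta_n(t)})| \to 0$ almost surely, hence in every $L^p$ by dominated convergence, and a single H\"older inequality (pairing this sup with the uniform bound $\sup_t\tilde E[|X_t-X_{\eta_n(t)}|^{2p}]^{1/2}\le Cn^{-p/2}$) gives the $o(n^{-p/2})$ uniformly in $t$ without any localisation.
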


\begin{proof}
Without loss of generality, we may assume that $\mu(x,y) = \mu(x)$ and $(X_t)$ is a one-dimensional process. 
By the mean value theorem, we have 
$
\mu(X_t) - \mu(X_{\eta_n(t)}) = \mu' (\xi_n(t)) (X_t-X_{\eta_n(t)})
$
for some $\xi_n(t)$. Therefore, we get 
\begin{align*}
\mu(X_t) - \mu(X_{\eta_n(t)}) - \int_{\eta_n(t)}^t \mu'(X_s)dX_s 
&= (\mu' (\xi_n(t))-\mu'(X_{\eta_n(t)})) (X_t-X_{\eta_n(t)}) 
\\ & \quad - \int_{\eta_n(t)}^t (\mu'(X_s)-\mu'(X_{\eta_n(s)}))dX_s. 
\end{align*}
By the assumption $(A_1)$, using Gronwall's inequality as seen in \cite[p136]{KP}, 
we can prove the following estimates
\begin{align}\label{eq:l^p_est1}
\sup_{0\leq t \leq 1}\tilde{E}[|X_t-X_{\eta_n(t)}|^{p}] \leq \frac{C(p)}{n^{p/2}}. 
\end{align}
We also obtain from the continuity of $X_t$
\begin{align}\label{eq:l^p_est2}
\tilde{E}[\sup_{0\leq t \leq 1}|\mu' (\xi_n(t))-\mu'(X_{\eta_n(t)})|^{p}] \rightarrow 0,
\quad 
\tilde{E}[\sup_{0\leq t \leq 1}|\mu' (X_t)-\mu'(X_{\eta_n(t)})|^{p}] \rightarrow 0.
\end{align}
Then the first result follows. 

The second part can be proved as follows. By the above estimates (\ref{eq:l^p_est1}) and (\ref{eq:l^p_est2}), 
\begin{align*}
& \tilde{E}\Big[\int_0^1 |\mu(X_t,Y_t) - \mu(X_{\eta_n(t)},Y_{\eta_n(t)})|^{p} dt \Big] 
\\ & \leq 
\sup_{0\leq t \leq 1}\tilde{E}\Big[ \Big| \int_{\eta_n(t)}^t (\nabla_x \mu(X_s,Y_s) dX_s + \nabla_y \mu(X_s,Y_s)dY_s)\Big|^{p} \Big] 
+ \frac{C_1(p)}{n^{p/2}}. 
\end{align*}
We can easily show that 
\[
\sup_{0\leq t \leq 1}\tilde{E}\Big[ \Big| \int_{\eta_n(t)}^t (\nabla_x \mu(X_s,Y_s) dX_s + \nabla_y \mu(X_s,Y_s)dY_s)\Big|^{p} \Big] 
\leq \frac{C_2(p)}{n^{p/2}}. 
\]
\end{proof}

Let us now introduce a basic estimate 
for the discretization of $\Phi_1$ with respect to $L^p$-norm. 

\begin{proposition}\label{prop:orderestimates}
Under $(A_{1})$-$(A_{3})$, we have 
\begin{align}
\tilde{E}[|\Phi_1-\bar{\Phi}_1^n|^p] 
\leq \frac{C(p)}{n^{p/2}}. \label{eq:sqrtestimate_density}
\end{align}
Furthermore, 
\begin{align}
\tilde{E}\Big[\big| \tilde{E}[g(X_1) \Phi_1|\F_1^Y] - \tilde{E}[g(X_1)\bar{\Phi}_1^n |\F_1^Y] \big|^p\Big]
\leq \frac{C(p,g)}{n^{p/2}}, \label{eq:sqrtestimate}
\\
E\Big[\Big|E[g(X_1)|\F_1^Y ] - 
\frac{\tilde{E}[g(X_1)\bar{\Phi}_1^n|\F_1^Y]}{\tilde{E}[\bar{\Phi}_1^n|\F_1^Y]}\Big|^p\Big]
\leq \frac{C(p,g)}{n^{p/2}}. 
\label{eq:sqrtestimate_final}
\end{align}
\end{proposition}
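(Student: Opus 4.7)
The plan is to prove the three estimates in order, treating (\ref{eq:sqrtestimate_density}) as the main engine from which (\ref{eq:sqrtestimate}) follows by conditional H\"older, and then combining both with the Kallianpur--Striebel formula to derive (\ref{eq:sqrtestimate_final}). The main obstacle is (\ref{eq:sqrtestimate_density}) itself, specifically passing from an $L^p$ bound on the exponent to an $L^p$ bound on the exponential while keeping all constants uniform in $n$.

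For (\ref{eq:sqrtestimate_density}) I would start from the identity
\begin{align*}
\log \Phi_1 - \log \bar{\Phi}_1^n &= \int_0^1 \bigl(h(X_s,Y_s) - h(X_{\eta_n(s)},Y_{\eta_n(s)})\bigr)\, dY_s
\\ &\quad - \frac{1}{2}\int_0^1 \bigl(|h|^2(X_s,Y_s) - |h|^2(X_{\eta_n(s)},Y_{\eta_n(s)})\bigr)\, ds.
\end{align*}
Under $(A_2)$ both $h$ and $|h|^2$ are $C^1$ with bounded partial derivatives, so Lemma \ref{lem:basicineq} applies componentwise and yields $L^p(\tilde P)$ bounds of order $n^{-1/2}$ for the integrand differences. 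Since $Y$ is a Brownian motion under $\tilde P$, the Burkholder--Davis--Gundy inequality controls the stochastic integral while the Lebesgue integral is controlled trivially, giving $\|\log \Phi_1 - \log \bar\Phi_1^n\|_{L^p(\tilde P)} = O(n^{-1/2})$ for every $p \ge 1$. Writing $\Phi_1 - \bar\Phi_1^n = \bar\Phi_1^n (e^{\log\Phi_1 - \log\bar\Phi_1^n} - 1)$ and using $|e^x - 1| \le |x| e^{|x|}$, H\"older together with the uniform moment bounds of Lemma \ref{lem:phi_conv}(2) (available for any real exponent because $h$ is bounded) yields (\ref{eq:sqrtestimate_density}).

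The bound (\ref{eq:sqrtestimate}) should then follow from conditional Jensen and Cauchy--Schwarz: the $L^p(\tilde P)$ norm of $\tilde E[g(X_1)(\Phi_1 - \bar\Phi_1^n)|\F_1^Y]$ is dominated by $(\tilde E[|g(X_1)|^{2p}])^{1/(2p)}(\tilde E[|\Phi_1 - \bar\Phi_1^n|^{2p}])^{1/(2p)}$. The first factor is finite by the polynomial growth of $g$ together with the standard moment estimate for the SDE solution $X$ under $(A_1)$, while the second is $O(n^{-1/2})$ by (\ref{eq:sqrtestimate_density}) applied at exponent $2p$.

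Finally, for (\ref{eq:sqrtestimate_final}) I would use the Kallianpur--Striebel formula $E[g(X_1)|\F_1^Y] = \tilde E[g(X_1)\Phi_1|\F_1^Y] / \tilde E[\Phi_1|\F_1^Y]$ together with the identity
\begin{align*}
\frac{A}{B} - \frac{A'}{B'} = \frac{A - A'}{B} - \frac{A'(B - B')}{B B'},
\end{align*}
where $(A,B) = (\tilde E[g(X_1)\Phi_1|\F_1^Y], \tilde E[\Phi_1|\F_1^Y])$ and $(A',B')$ are the corresponding approximations. Passing from $P$ to $\tilde P$ via $dP = \Phi_1\, d\tilde P$ and one application of Cauchy--Schwarz reduces the claim to bounding the above difference in $L^{2p}(\tilde P)$. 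The numerators $A - A'$ and $B - B'$ are handled by (\ref{eq:sqrtestimate}) and (\ref{eq:sqrtestimate_density}) respectively, and $A'$ is bounded in every $L^q(\tilde P)$ via Lemma \ref{lem:phi_conv}(2) and the polynomial growth of $g$. The denominators are controlled by the conditional Jensen inequality $1/\tilde E[\Phi_1|\F_1^Y] \le \tilde E[\Phi_1^{-1}|\F_1^Y]$ (and its analogue for $\bar\Phi_1^n$), so Lemma \ref{lem:phi_conv}(2) applied at negative exponents furnishes uniform $L^q(\tilde P)$ bounds on $1/B$ and $1/B'$. A final H\"older assembles the pieces.
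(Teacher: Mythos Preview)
Your proposal is correct and follows essentially the same strategy as the paper. The only cosmetic difference is in how the exponential is linearized for (\ref{eq:sqrtestimate_density}): the paper writes $\Phi_1-\bar\Phi_1^n=\bar\Gamma^n(\log\Phi_1-\log\bar\Phi_1^n)$ with $\bar\Gamma^n=\int_0^1\exp(\rho\log\Phi_1+(1-\rho)\log\bar\Phi_1^n)\,d\rho$ via the mean value theorem, whereas you use $|e^x-1|\le|x|e^{|x|}$; both factorizations isolate a term with all $L^q(\tilde P)$ moments (by Lemma \ref{lem:phi_conv}(2)) multiplied by the log-difference, after which BDG and Lemma \ref{lem:basicineq} give the rate. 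Your treatment of (\ref{eq:sqrtestimate_final}) is in fact more explicit than the paper's: the paper records the same algebraic decomposition (\ref{eq:error_decom_P}) and then simply invokes ``the moment estimates in Lemma \ref{lem:phi_conv}'', while you spell out the change of measure $E[\cdot]=\tilde E[\Phi_1\cdot]$ and the conditional Jensen step $1/\tilde E[\Phi_1\mid\F_1^Y]\le\tilde E[\Phi_1^{-1}\mid\F_1^Y]$ needed to control the denominators.
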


\begin{proof}
For every $p \geq 1$, we shall prove (\ref{eq:sqrtestimate_density}). 
We can apply the mean value theorem to $\Phi_1-\bar{\Phi}_1^n$. Then we have 
\begin{align}\label{eq:phi_meanvalue}
\Phi_1-\bar{\Phi}_1^n = \bar{\Gamma}^n \Big( 
\int_0^1(h(X_s,Y_s)-h(X_{\eta_n(s)},Y_{\eta_n(s)}))dY_s 
- \frac{1}{2}\int_0^1(|h|^2(X_s,Y_s)-|h|^2(X_{\eta_n(s)},Y_{\eta_n(s)}))ds\Big), 
\end{align}
where $\bar{\Gamma}^n = \int_0^1 \exp(\rho \log(\Phi_1)+(1-\rho)\log(\bar{\Phi}_1^n))d\rho$. 
The estimate $\sup_n\tilde{E}[|\bar{\Gamma}^n|^p]^{1/p} < \infty$ is a consequence of Lemma \ref{lem:phi_conv}.
By the Burkholder-Davis-Gundy inequality, we have 
\begin{align*}
& \tilde{E}\Big[\Big|
\int_0^1(h(X_s,Y_s)-h(X_{\eta_n(s)},Y_{\eta_n(s)}))dY_s 
- \frac{1}{2}\int_0^1(|h|^2(X_s,Y_s)-|h|^2(X_{\eta_n(s)},Y_{\eta_n(s)}))ds\Big|^p\Big] 
\\ & \leq C_p \tilde{E}\Big[ \int_0^1(|h(X_s,Y_s)-h(X_{\eta_n(s)},Y_{\eta_n(s)})|^p 
+ ||h|^2(X_s,Y_s)-|h|^2(X_{\eta_n(s)},Y_{\eta_n(s)})|^p)ds\Big].
\end{align*}
Therefore the result (\ref{eq:sqrtestimate_density}) follows from Lemma \ref{lem:basicineq} with $\mu = h$ and $\mu =|h|^2$. 
We can easily prove (\ref{eq:sqrtestimate}) using (\ref{eq:sqrtestimate_density}) and the Cauchy-Schwarz inequality. 

For the proof of (\ref{eq:sqrtestimate_final}), 
let $\rho_1(g) := \tilde{E}[g(X_1)\Phi_1|\F_1^Y]$ and $\bar{\rho}_1^n(g) := \tilde{E}[g(X_1)\bar{\Phi}_1^n|\F_1^Y]$. 
The error is expressed as 
\begin{align}\label{eq:error_decom_P}
\frac{\rho_1(g)}{\rho_1({\bf 1})} - \frac{\bar{\rho}^n_1(g)}{\bar{\rho}^n_1({\bf 1})} = 
\frac{1}{\rho_1({\bf 1})} (\rho_1(g) - \bar{\rho}^n_1(g))
- \frac{\bar{\rho}^n_1(g)}{\rho_1({\bf 1})\bar{\rho}^n_1({\bf 1})}(\rho_1({\bf1}) - \bar{\rho}^n_1({\bf 1})).
\end{align}
Then the result follows from (\ref{eq:sqrtestimate}) and the moment estimates in Lemma \ref{lem:phi_conv}. 
\end{proof}

We now prove Theorem \ref{thm:app1}. 
\begin{proof}[Proof of Theorem \ref{thm:app1} {\rm(i)}]
Using the notation in Theorem \ref{thm:limittheorem}, we shall prove 
\begin{align}\label{eq:limit_target_Sec4}
\sqrt{n}\tilde{E}[ g(X_1) (\Phi_1- \bar{\Phi}_1^n) |\F_1^Y]
\Longrightarrow^{s-\F} \frac{1}{\sqrt{2}} \sum_{i,j}\int_0^1 u_s^{ij}(g) dN_s^{ij}. 
\end{align}
The limit random variable in (\ref{eq:limit_target_Sec4}) 
has the same law as $Z (\frac{1}{2}\sum_{i,j} \int_0^1 |u_s^{ij}(g)|^2 ds)^{1/2}$ 
under the conditional probability given $\F$. 

From the expression in (\ref{eq:phi_meanvalue}), we obtain 
\begin{align*}
\sqrt{n} \tilde{E}[g(X_1) (\Phi_1- \bar{\Phi}_1^n) |\F_1^Y] 
= \tilde{E}[g(X_1) \bar{\Gamma}^n \sqrt{n} J^n |\F_1^Y] 
\end{align*}
with 
\begin{align*}
J^n := \int_0^1(h(X_s,Y_s)-h(X_{\eta_n(s)},Y_{\eta_n(s)}))dY_s 
- \frac{1}{2}\int_0^1(|h|^2(X_s,Y_s)-|h|^2(X_{\eta_n(s)},Y_{\eta_n(s)}))ds. 
\end{align*}
Applying 
$\bar{\Gamma}^n \rightarrow^P \Phi_1$ and $\tilde{E}[|\sqrt{n} J^n|^2] \leq C$,  
we can prove 
\[
\tilde{E}[g(X_1) \bar{\Gamma}^n \sqrt{n} J^n |\F_1^Y] 
= \tilde{E}[g(X_1) \Phi_1 \sqrt{n} J^n |\F_1^Y] + o_P(1).
\] 
Taking into account Theorem \ref{thm:limittheorem}, it suffices to show that 
\begin{align*}
\tilde{E}[g(X_1) \Phi_1 \sqrt{n} J^n |\F_1^Y] 
& = \sum_{i,j} \sqrt{n} 
\tilde{E}\Big[g(X_1)\Phi_1\int_0^1 \int_{\eta_n(s)}^s a_r^{ij} dY_r^j dY_s^i \Big|\F_1^Y\Big]
+o_P(1), 
\\ a_t^{ij} &= \sum_{k}\partial_{x_k}h_i(X_t, Y_t)v_{kj}(X_t,Y_t) 
+ \partial_{y_j} h_i(X_t, Y_t), 
\end{align*}
which implies the desired result (\ref{eq:limit_target_Sec4}). 
To see this, set $J^n = K^n - \frac{1}{2} L^n$ as 
\begin{align*}
K^n &:= \int_0^1(h(X_s,Y_s)-h(X_{\eta_n(s)},Y_{\eta_n(s)}))dY_s, 
\\ 
L^n &= \int_0^1(|h|^2(X_s,Y_s)-|h|^2(X_{\eta_n(s)},Y_{\eta_n(s)}))ds. 
\end{align*}
Applying the first part of Lemma \ref{lem:basicineq} to $K^n$, we get 
\begin{align*}
n \tilde{E}\Big[\Big| K^n - \sum_i \int_0^1 
\int_{\eta_n(s)}^s \Big\{ \sum_k \partial_{x_k}h_i(X_r,Y_r)dX_r^k + \sum_j \partial_{y_j}h_i(X_y,Y_r)dY_r^j\Big\}
dY_s^i \Big|^2\Big] \rightarrow 0. 
\end{align*}
Note that $dX_t^k = b_k(X_t, Y_t)dt 
+ \sum_{l} \sigma_{kl}(X_t,Y_t)dB_t^l + \sum_{j}v_{kj}(X_t,Y_t)dY_t^j$. 
Thus, by Theorem \ref{thm:zerolimit2}, we have 
\begin{align*}
\tilde{E}[g(X_1) \Phi_1 \sqrt{n} K^n |\F_1^Y] 
= \sum_{i,j} \sqrt{n} \tilde{E}\Big[g(X_1)\Phi_1\int_0^1 \int_{\eta_n(s)}^s a_r^{ij} dY_r^j dY_s^i \Big|\F_1^Y\Big] + o_P(1). 
\end{align*}
A similar calculation can be applied to $L^n$, and it follows from Theorem \ref{thm:zerolimit2} that 
\[
\tilde{E}[g(X_1) \Phi_1 \sqrt{n} L^n |\F_1^Y] \rightarrow^P 0. 
\]
This proves the assertion (i). 
\end{proof}

\begin{proof}[Proof of Theorem \ref{thm:app1} {\rm(ii)}]
Recall the expression (\ref{eq:error_decom_P}), that is, 
\begin{align*}
\frac{\rho_1(g)}{\rho_1({\bf 1})} - \frac{\bar{\rho}^n_1(g)}{\bar{\rho}^n_1({\bf 1})} = 
\frac{1}{\rho_1({\bf 1})} (\rho_1(g) - \bar{\rho}^n_1(g))
- \frac{\bar{\rho}^n_1(g)}{\rho_1({\bf 1})\bar{\rho}^n_1({\bf 1})}(\rho_1({\bf1}) - \bar{\rho}^n_1({\bf 1})).
\end{align*}
As seen in Theorem \ref{thm:limittheorem} ($q=2$) and the proof of Theorem \ref{thm:app1} (i), 
the pair $(\rho_1(g)-\bar{\rho}_1^n(g), \rho_1({\bf 1})-\bar{\rho}_1^n({\bf1}))$ has the following limit 
\[
\sqrt{n} \left(\rho_1(g)-\bar{\rho}_1^n(g), \rho_1({\bf 1})-\bar{\rho}_1^n({\bf1})\right) \Longrightarrow^{s-\F} 
\left( \frac{1}{\sqrt{2}} \sum_{i,j}\int_0^1 u_s^{ij}(g) dN_s^{ij}, 
\frac{1}{\sqrt{2}} \sum_{i,j} \int_0^1 u_s^{ij}({\bf1}) dN_s^{ij} \right).
\]
Applying the result $\rho_1(g)-\bar{\rho}_1^n(g) \rightarrow^P 0$, we can obtain 
\begin{align*}
\sqrt{n} \left(\frac{\rho_1(g)}{\rho_1({\bf 1})} - \frac{\bar{\rho}^n_1(g)}{\bar{\rho}^n_1({\bf 1})}\right)
\Longrightarrow^{s-\F} & \frac{1}{\sqrt{2}} \sum_{i,j}\int_0^1 \frac{u_s^{ij}(g)}{\rho_1({\bf1})} dN_s^{ij} 
- \frac{1}{\sqrt{2}}\frac{\rho_1(g)}{\rho_1({\bf1})} \sum_{i,j}\int_0^1 \frac{u_s^{ij}({\bf1})}{\rho_1({\bf1})} dN_s^{ij}
\\ = & \ \frac{1}{\sqrt{2}} \sum_{i,j} \int_0^1 \mu_s^{ij}(g) dN_s^{ij}.
\end{align*}
We finally remark that the stable limit still holds under the original measure $P$ 
which is absolutely continuous with respect to $\tilde{P}$.
\end{proof}

\subsection{Proof of Theorem \ref{thm:app2}}\label{subsec:proof2}
As well-known, by applying Gronwall's inequality to the function 
$s \mapsto \tilde{E}[ \sup_{0\leq t \leq s} |X_t - \bar{X}_t^n|^p]$,
the strong rate of convergence of the Euler-Maruyama scheme for the SDE with Lipschitz continuous coefficients 
is given by 
\begin{align}\label{eq:EM_estimate}
\tilde{E}\Big[ \sup_{0\leq t \leq 1} |X_t - \bar{X}_t^n|^p \Big] \leq \frac{C(p)}{n^{p/2}}. 
\end{align}
Furthermore, for any continuous and polynomial growth function $f$, we also have by (\ref{eq:EM_estimate}) 
\begin{align}\label{eq:EM_conv_p}
\sup_{0 \leq t \leq 1}|f(X_t)-f(\bar{X}_t^n)| \rightarrow 0 \quad \textrm{ in probability and in } L^p. 
\end{align}

Similarly to the proof of Lemma \ref{lem:basicineq}, 
the following $L^p$-estimate can be established 
by using the estimates (\ref{eq:EM_estimate}) and (\ref{eq:EM_conv_p}). 


\begin{lemma}\label{lem:basicineq2}
If $(A'_1)$ holds and $\mu \in C^1(\R^e \times \R^d)$ with bounded partial derivatives,  
then for every $p \geq 1$, 
\[
n^{p/2} \sup_{0 \leq t \leq 1} \tilde{E}\Big[\big| 
\mu(\bar{X}_t^n,Y_t) - \mu(\bar{X}_{\eta_n(t)}^n,Y_{\eta_n(t)}) - 
\int_{\eta_n(t)}^t ( \nabla_x \mu(X_s,Y_s) dX_s + \nabla_y \mu(X_s,Y_s)dY_s) 
\big|^{p}\Big] \rightarrow 0.
\]
\end{lemma}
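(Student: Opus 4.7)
The plan is to reduce the statement to Lemma~\ref{lem:basicineq} via a strong-convergence comparison between the Euler scheme $\bar{X}^n$ and the true process $X$. First I decompose
\begin{align*}
& \mu(\bar{X}_t^n,Y_t) - \mu(\bar{X}_{\eta_n(t)}^n,Y_{\eta_n(t)}) - \int_{\eta_n(t)}^t \left( \nabla_x \mu(X_s,Y_s) dX_s + \nabla_y \mu(X_s,Y_s) dY_s \right) \\
& = (I_n) + (II_n),
\end{align*}
where $(I_n)$ is the same expression with $X$ in place of $\bar{X}^n$ in the two boundary terms and
\[
(II_n) = [\mu(\bar{X}_t^n, Y_t) - \mu(X_t,Y_t)] - [\mu(\bar{X}_{\eta_n(t)}^n, Y_{\eta_n(t)}) - \mu(X_{\eta_n(t)}, Y_{\eta_n(t)})].
\]
Lemma~\ref{lem:basicineq} applied to $\mu$ and $X$ gives $\sup_t \tilde{E}[|(I_n)|^p] = o(n^{-p/2})$, so it remains to show $\sup_t \tilde{E}[|(II_n)|^p] = o(n^{-p/2})$.

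For $(II_n)$, I apply the integral form of the mean value theorem at $s=t$ and $s=\eta_n(t)$, writing
\[
\mu(\bar{X}_s^n, Y_s) - \mu(X_s, Y_s) = \Delta_s \cdot \nabla_x \mu(X_s,Y_s) + r_s^n,
\]
where $\Delta_s := \bar{X}_s^n - X_s$ and
\[
r_s^n := \Delta_s \cdot \int_0^1 [\nabla_x \mu(X_s+\rho\Delta_s, Y_s) - \nabla_x \mu(X_s, Y_s)]\, d\rho.
\]
By H\"older's inequality combined with $\|\Delta_s\|_{2p} = O(n^{-1/2})$ from (\ref{eq:EM_estimate}), the remainder satisfies $\|r_s^n\|_p = o(n^{-1/2})$ uniformly in $s$, provided the bracket tends to zero in $L^{2p}$ uniformly in $s$; this follows from uniform continuity of $\nabla_x \mu$ on compact sets together with uniform $L^q$-moment bounds on $X$ and $\bar{X}^n$.

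Substituting back, $(II_n) = A_n(t) + B_n(t)$ plus an $L^p$-error of size $o(n^{-1/2})$, where
\begin{align*}
A_n(t) &= [\Delta_t - \Delta_{\eta_n(t)}] \cdot \nabla_x \mu(X_{\eta_n(t)}, Y_{\eta_n(t)}), \\
B_n(t) &= \Delta_t \cdot [\nabla_x \mu(X_t, Y_t) - \nabla_x \mu(X_{\eta_n(t)}, Y_{\eta_n(t)})].
\end{align*}
For $A_n$, boundedness of $\nabla_x \mu$ reduces matters to bounding $\Delta_t - \Delta_{\eta_n(t)}$: writing this increment explicitly via the stochastic differential equations for $X$ and $\bar{X}^n$ as a sum of Lebesgue and It\^o integrals over $[\eta_n(t),t]$ whose integrands are Lipschitz-controlled by $|\bar{X}_{\eta_n(s)}^n - X_s| + |Y_{\eta_n(s)} - Y_s| = O(n^{-1/2})$ in $L^p$, the Burkholder--Davis--Gundy inequality on an interval of length $1/n$ yields $\|\Delta_t - \Delta_{\eta_n(t)}\|_p = O(n^{-1}) = o(n^{-1/2})$. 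For $B_n$, H\"older's inequality with $\|\Delta_t\|_{2p} = O(n^{-1/2})$ reduces matters to uniform-in-$t$ $L^{2p}$ convergence to zero of $\nabla_x \mu(X_t, Y_t) - \nabla_x \mu(X_{\eta_n(t)}, Y_{\eta_n(t)})$, which follows as above. Combining, $\sup_t \tilde{E}[|(II_n)|^p] = o(n^{-p/2})$, completing the proof.

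The main technical obstacle is the uniform-in-$s$ vanishing of Taylor-type remainders such as $r_s^n$ and of the continuity increment in $B_n$: since $(A_1')$ only supplies that $\nabla_x \mu$ is continuous (not Lipschitz), one must establish uniform-in-$t$ smallness of the relevant continuity moduli through a standard localization argument that exploits the uniform $L^q$-boundedness of $(X, \bar{X}^n, Y)$ to restrict to a compact set with arbitrarily high probability.
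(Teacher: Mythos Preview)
Your argument is correct. The paper does not give a detailed proof of this lemma; it only remarks that the estimate follows ``similarly to the proof of Lemma~\ref{lem:basicineq}'' together with the Euler--Maruyama bounds (\ref{eq:EM_estimate}) and (\ref{eq:EM_conv_p}). Your decomposition $(I_n)+(II_n)$ is a clean way to implement exactly that: $(I_n)$ reuses Lemma~\ref{lem:basicineq} verbatim, and $(II_n)$ is controlled by the strong convergence of $\bar{X}^n$ to $X$ and the modulus-of-continuity arguments for $\nabla\mu$ (the analogues of (\ref{eq:l^p_est2})). The paper's hint points toward redoing the mean-value computation of Lemma~\ref{lem:basicineq} directly with $(\bar{X}^n,Y)$ and then inserting (\ref{eq:EM_estimate})--(\ref{eq:EM_conv_p}) where needed; your route instead isolates Lemma~\ref{lem:basicineq} as a black box and handles the Euler correction separately. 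The two organizations use the same ingredients and the same key estimate $\|\Delta_t-\Delta_{\eta_n(t)}\|_p=O(n^{-1})$; your version has the minor advantage of not repeating the argument of Lemma~\ref{lem:basicineq}. The localization step you flag for the remainders $r_s^n$ and $B_n(t)$ is indeed needed, but it is straightforward here because $\nabla\mu$ is \emph{bounded}: the relevant differences are pointwise bounded by $2\|\nabla\mu\|_\infty$, and $\sup_s|\Delta_s|\to 0$ in probability, so dominated convergence gives the uniform-in-$s$ $L^{2p}$ smallness directly without any truncation.
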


We now give the $L^p$-estimates corresponding to (\ref{eq:order_1/2}) in our introduction. 
The proof is straightforward, and can be easily obtained as in Proposition \ref{prop:orderestimates}.
\begin{proposition}\label{prop:lpestimate2}
Under the assumptions $(A_{1}')$-$(A_{3}')$, we have 
\begin{align*}
\tilde{E}[|\Phi_1 - \bar{\Phi}_1^n(\bar{X}^n,Y)|^p] \leq \frac{C(p)}{n^{p/2}}.
\end{align*}
Moreover, 
\begin{align*}
\tilde{E}[|\tilde{E}[ g(X_1)\Phi_1 - g(\bar{X}_1^n)\bar{\Phi}_1^n(\bar{X}^n, Y)|\F_1^Y ]|^p] \leq \frac{C(p,g)}{n^{p/2}},  
\\
E\Big[\Big|E[g(X_1)|\F_1^Y ] 
- \frac{\tilde{E}[g(\bar{X}_1^n)\bar{\Phi}_1^n(\bar{X}^n,Y)|\F_1^Y]}{\tilde{E}[\bar{\Phi}_1^n(\bar{X}^n,Y)|\F_1^Y]}\Big|^p\Big] 
\leq \frac{C(p,g)}{n^{p/2}}. 
\end{align*}
\end{proposition}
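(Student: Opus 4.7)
The plan is to follow the blueprint of Proposition \ref{prop:orderestimates}, replacing the Lipschitz-based increment bounds on $X$ with the Euler--Maruyama estimate (\ref{eq:EM_estimate}) and the path-regularity Lemma \ref{lem:basicineq2}. First I would handle the density estimate. Apply the mean value theorem to the exponential (as in (\ref{eq:phi_meanvalue})), giving
\begin{align*}
\Phi_1 - \bar{\Phi}_1^n(\bar{X}^n, Y) &= \bar{\Gamma}^n \Big( \int_0^1 \bigl( h(X_s,Y_s) - h(\bar{X}_{\eta_n(s)}^n, Y_{\eta_n(s)})\bigr) dY_s \\
& \quad - \tfrac{1}{2}\int_0^1 \bigl(|h|^2(X_s,Y_s) - |h|^2(\bar{X}_{\eta_n(s)}^n, Y_{\eta_n(s)})\bigr) ds \Big),
\end{align*}
where $\bar{\Gamma}^n$ has moments of all orders, bounded uniformly in $n$; this is because $h$ is bounded under $(A_2')$, so the analogue of Lemma \ref{lem:phi_conv}.2 goes through verbatim with $X$ replaced by $\bar{X}^n$. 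The Burkholder--Davis--Gundy inequality reduces the $L^p$-norm of the parenthesis to bounding $\int_0^1 |h(X_s,Y_s) - h(\bar{X}_{\eta_n(s)}^n, Y_{\eta_n(s)})|^p ds$ (and similarly for $|h|^2$). I would split
\[
h(X_s,Y_s) - h(\bar{X}_{\eta_n(s)}^n, Y_{\eta_n(s)}) = \bigl(h(X_s,Y_s) - h(\bar{X}_s^n,Y_s)\bigr) + \bigl(h(\bar{X}_s^n,Y_s) - h(\bar{X}_{\eta_n(s)}^n, Y_{\eta_n(s)})\bigr),
\]
controlling the first summand by $C|X_s - \bar{X}_s^n|$ via the Lipschitz continuity of $h$ and (\ref{eq:EM_estimate}), and the second summand directly by Lemma \ref{lem:basicineq2} with $\mu = h$. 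Exactly the same computation with $\mu = |h|^2$ handles the drift term, proving the first bound.

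For the second bound, I would use the algebraic identity
\[
g(X_1)\Phi_1 - g(\bar{X}_1^n)\bar{\Phi}_1^n(\bar{X}^n, Y) = \bigl(g(X_1) - g(\bar{X}_1^n)\bigr)\Phi_1 + g(\bar{X}_1^n)\bigl(\Phi_1 - \bar{\Phi}_1^n(\bar{X}^n, Y)\bigr).
\]
The mean value theorem and the polynomial-growth hypothesis on $\nabla g$ under $(A_3')$, combined with (\ref{eq:EM_estimate}) and the standard moment bounds on $\sup_{t\leq 1}(|X_t| \vee |\bar{X}_t^n|)$, yield $\|g(X_1) - g(\bar{X}_1^n)\|_p \leq C(p,g)/\sqrt{n}$; H\"older's inequality against the moments of $\Phi_1$ (Lemma \ref{lem:phi_conv}.2) controls the first summand. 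The second summand is handled by H\"older's inequality using the polynomial growth of $g$, moments of $\bar{X}_1^n$, and the first bound of the proposition. Passing to conditional expectation via Jensen finishes this step.

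Finally, for the third bound, I would repeat the decomposition (\ref{eq:error_decom_P}) with $\rho_1(g) = \tilde{E}[g(X_1)\Phi_1 | \F_1^Y]$ and $\bar{\rho}_1^n(g) = \tilde{E}[g(\bar{X}_1^n)\bar{\Phi}_1^n(\bar{X}^n,Y)|\F_1^Y]$,
\[
\frac{\rho_1(g)}{\rho_1(\mathbf{1})} - \frac{\bar{\rho}_1^n(g)}{\bar{\rho}_1^n(\mathbf{1})} = \frac{1}{\rho_1(\mathbf{1})}\bigl(\rho_1(g) - \bar{\rho}_1^n(g)\bigr) - \frac{\bar{\rho}_1^n(g)}{\rho_1(\mathbf{1})\bar{\rho}_1^n(\mathbf{1})}\bigl(\rho_1(\mathbf{1}) - \bar{\rho}_1^n(\mathbf{1})\bigr),
\]
then apply the second bound to both differences. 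The only additional ingredient is $L^p$-control of $1/\rho_1(\mathbf{1})$ and $1/\bar{\rho}_1^n(\mathbf{1})$, which follows from the boundedness of $h$: both $\Phi_1^{-1}$ and $\bar{\Phi}_1^n(\bar{X}^n,Y)^{-1}$ have uniformly bounded moments, so by Jensen $1/\tilde{E}[\cdot|\F_1^Y] \leq \tilde{E}[(\cdot)^{-1}|\F_1^Y]$ is in every $L^p$ uniformly in $n$.

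The only point where care is needed is the uniform-in-$n$ moment control of $\bar{\Phi}_1^n(\bar{X}^n,Y)$ and its reciprocal when the signal is replaced by its Euler scheme; this is where the assumption that $h$ is bounded (rather than merely of linear growth) is essential. Once that is in hand, the rest is a mechanical transposition of the proof of Proposition \ref{prop:orderestimates}.
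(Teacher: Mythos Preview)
Your proposal is correct and takes essentially the same approach as the paper, which simply remarks that the proof is ``straightforward'' and ``can be easily obtained by the argument used in Proposition~\ref{prop:orderestimates}''; you have supplied the natural elaboration of that argument, combining (\ref{eq:EM_estimate}) with Lemma~\ref{lem:basicineq2} and the decomposition (\ref{eq:error_decom_P}) exactly as intended.
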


Before proving Theorem \ref{thm:app2}, we introduce 
a useful formula for linear stochastic differential equations without proof (see e.g.\ \cite{CKL06}). 
\begin{lemma}\label{lem:linearsol}
Let $Z_t = (t, B_1^1, \dots, B_t^e, Y_t^1, \dots, Y_t^d)$, 
$G_t$ be a $q$-dimensional continuous semimartingale 
and $\{(a_{ij}^k(t)): 1\leq i,j\leq q, 1\leq k \leq 1+e+d\}$ be a bounded predictable process. 
Suppose $\{(\varphi_t^i): 1 \leq i \leq q\}$ is the solution of 
the stochastic integral equation 
\begin{align*}
\varphi_t^i = \sum_{j,k} \int_0^t a_{ij}^k(s) \varphi_s^j dZ_s^k + G_t^i, 
\end{align*}
and $\{(\psi_t^{ij}), 1 \leq i,j \leq q\}$ is the solution of 
the linear stochastic differential equation 
\begin{align*}
\psi_t^{ij} = \delta_{ij} + \sum_{l,k} \int_0^t a_{il}^k(s) \psi_s^{lj} dZ_s^k. 
\end{align*}
Then $(\varphi_t)$ can be solved as 
\begin{align*}
\varphi_t^i = \sum_{j,k} \psi_t^{ij} \int_0^t \big((\psi_s^{-1})^{jk} dG_s^k 
- \sum_{l,m} (\psi_s^{-1})^{jl}a_{lk}^m(s) d\langle Z^m, G^k \rangle_s \big).
\end{align*}
\end{lemma}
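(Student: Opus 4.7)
The plan is to prove the identity by the variation of parameters method: set
\[
\tilde\varphi_t^i := \sum_j \psi_t^{ij} U_t^j, \qquad U_t^j := \sum_k \int_0^t \Big( (\psi_s^{-1})^{jk} dG_s^k - \sum_{l,m}(\psi_s^{-1})^{jl} a_{lk}^m(s) d\langle Z^m, G^k\rangle_s \Big),
\]
and show that $\tilde\varphi_t$ satisfies the same linear stochastic integral equation as $\varphi_t$. Uniqueness, which holds since the coefficients $a_{ij}^k$ are bounded predictable processes and $Z$ is a continuous semimartingale, will then give $\tilde\varphi = \varphi$. Before this, one checks that $\psi_t$ is almost surely invertible by constructing a candidate inverse $\Xi_t$ as the solution of an auxiliary linear SDE (read off from the formal calculation $d(\psi\Xi)=0$) and using It\^o's product formula to verify $\psi_t\Xi_t \equiv I$.

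Given invertibility, I would apply the It\^o product rule to $\tilde\varphi^i = \sum_j \psi^{ij} U^j$:
\[
d\tilde\varphi_t^i = \sum_j (d\psi_t^{ij})\, U_t^j + \sum_j \psi_t^{ij}\, dU_t^j + \sum_j d\langle \psi^{ij}, U^j\rangle_t.
\]
Substituting $d\psi_t^{ij} = \sum_{k,l} a_{il}^k(t)\psi_t^{lj}dZ_t^k$ into the first term and relabeling yields $\sum_{j,k} a_{ij}^k(t)\tilde\varphi_t^j dZ_t^k$. The second term collapses, using $\sum_j \psi_t^{ij}(\psi_t^{-1})^{jk}=\delta_{ik}$, to $dG_t^i - \sum_{k,m} a_{ik}^m(t)\,d\langle Z^m,G^k\rangle_t$. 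For the third, only the martingale part of $U^j$ (the $dG$ integral) contributes to quadratic covariation, and a direct calculation using $\sum_j \psi_t^{lj}(\psi_t^{-1})^{jk'}=\delta_{lk'}$ gives $\sum_{k,k'} a_{ik'}^k(t)\,d\langle Z^k,G^{k'}\rangle_t$, which after relabeling exactly cancels the correction in the second term. Combining gives $d\tilde\varphi_t^i = \sum_{j,k} a_{ij}^k(t)\tilde\varphi_t^j dZ_t^k + dG_t^i$, as required.

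The main obstacle is the index bookkeeping in the quadratic covariation computation; the correction term $-\sum_{l,m}(\psi_s^{-1})^{jl}a_{lk}^m(s)\,d\langle Z^m,G^k\rangle_s$ in the definition of $U_t^j$ is placed there precisely to annihilate this cross variation, and seeing the exact cancellation requires three simultaneous index reductions together with the invertibility identity. A minor subtlety is the initial condition: $\tilde\varphi_0 = \psi_0 U_0 = 0$, so the stated formula implicitly assumes $G_0=0$, which holds in the intended application in Section \ref{subsec:proof2}, where $G$ will arise as a difference of continuous semimartingales starting at zero.
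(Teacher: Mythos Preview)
Your variation-of-parameters argument is correct and is the standard proof of this formula. The paper itself does not prove this lemma: it is stated ``without proof'' with a reference to Cl\'ement, Kohatsu-Higa and Lamberton \cite{CKL06}, where the same computation is carried out. Your observation that the formula as stated presupposes $G_0=0$ is accurate and worth noting; in the paper's application (equation \eqref{eq:remainder_terms_all}) the driving term $G$ is indeed a sum of stochastic integrals and hence vanishes at $t=0$.
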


Using the above Lemma, we can prove the main result in this section. 
The essential idea of the computation is similar to \cite[Section 1]{CKL06}. 

\begin{proof}[Proof of Theorem \ref{thm:app2}]
We only prove (i) by showing that 
\begin{align*}
& \sqrt{n}\left(\tilde{E}[g(X_1) \Phi_1|\F_1^Y] - \tilde{E}[g(\bar{X}_1^n)\bar{\Phi}_1^n(\bar{X}^n, Y) |\F_1^Y]\right)
\\ &= 
\sum_{i,j} \sqrt{n} \tilde{E}\Big[ 
\sum_{k',k''} \Big( \sum_{1\leq k \leq e}\partial_{k}g(X_1) \mathcal{E}_1^{kk'} + \mathcal{E}_1^{(e+1)k'}\Big) 
\int_0^1\int_{\eta_n(s)}^s (\mathcal{E}_r^{-1})^{k'k''} f_r^{ijk''} dY_r^j dY_s^i \Big|\F_1^Y\Big] + o_P(1).
\end{align*}
The result (ii) can be obtained from the same argument in Theorem \ref{thm:app1} (ii). 

Let us first define an $(e+1)$-dimensional process $\tilde{X}_t$ by 
\begin{align*}
\tilde{X}_t^i = 
\left\{ \begin{array}{ll}
    X_t^i & i = 1, \dots, e, \\
    \int_0^t h(X_s,Y_s)dY_s - \frac{1}{2}\int_0^t |h|^2(X_s,Y_s)ds,  & i = e+1
  \end{array} \right.
\end{align*}
and its approximation $\hat{X}_t^n$ by 
\begin{align*}
\hat{X}_t^{n,i} = 
\left\{ \begin{array}{ll}
    \bar{X}_t^{n,i} & i = 1, \dots, e, \\
    \int_0^t h(X_{\eta_n(s)},Y_{\eta_n(s)})dY_s - \frac{1}{2}\int_0^t |h|^2(X_{\eta_n(s)},Y_{\eta_n(s)})ds, & i = e+1.
  \end{array} \right.
\end{align*}
Set $Z_t = (t, B_1^1, \dots, B_t^e, Y_t^1, \dots, Y_t^d)$. 
For each $1 \leq k \leq e+1$ and $1 \leq m \leq e+d+1$, 
define $A_{k}^m(x,y): \R^{e}\times\R^d \rightarrow \R$ by 
\begin{align*}
A_{k}^m(x,y) = 
\left\{ \begin{array}{ll}
    b_k(x,y), & 1\leq k \leq e, \ m=0, \\
    -\frac{1}{2}|h|^2(x,y), & k=e+1, \ m=0, \\
    \sigma_{km}(x,y), & 1\leq k \leq e, \ 1\leq m \leq e, \\
    v_{k(m-e)}(x,y), & 1\leq k \leq e, \ e+1\leq m \leq d+e+1, \\
    h_{(m-e)}(x,y), & k=e+1, \ e+1\leq m \leq d+e+1, \\
    0, & \textrm{otherwise}.
  \end{array} \right.
\end{align*}
Then $\tilde{X}_t$ solves 
\[
d\tilde{X}_t^k = \sum_m A_k^m(X_t, Y_t)dZ_t^m. 
\]
The function $\tilde{g}: \R^{e+1} \rightarrow \R$ is defined as 
\[
\tilde{g}(x_1, \dots, x_e, x_{e+1}) = g(x_1,\dots,x_e)\exp(x_{e+1}). 
\]
Substituting the above into the target $\tilde{E}[g(X_1) \Phi_1|\F_1^Y] - \tilde{E}[g(\bar{X}_1^n)\bar{\Phi}_1^n(\bar{X}^n, Y) |\F_1^Y]$, 
we have 
\begin{align*}
C_n 
&:= \sqrt{n} \left(\tilde{E}[g(X_1) \Phi_1|\F_1^Y] - \tilde{E}[g(\bar{X}_1^n)\bar{\Phi}_1^n(\bar{X}^n, Y) |\F_1^Y] \right)
\\ &= \sqrt{n} \tilde{E}[ \tilde{g}(\tilde{X}_1) - \tilde{g}(\hat{X}_1^n) |\F_1^Y]. 
\end{align*}
By the mean value theorem for $\tilde{g}$, we can choose $\xi_n$ such that 
\begin{align*}
\tilde{g}(\tilde{X}_1) - \tilde{g}(\hat{X}_1^n) =  
\sum_k (\partial_k \tilde{g})(\xi_n) (\tilde{X}_1^k - \hat{X}_1^{n,k}).
\end{align*}
To see the asymptotic error $\sqrt{n}(\tilde{g}(\tilde{X}_1) - \tilde{g}(\hat{X}_1^n))$, 
we consider the $L^1$-estimate below:
\begin{align*}
& \|((\partial_k \tilde{g})(\tilde{X}_1)-(\partial_k \tilde{g})(\xi_n))\sqrt{n}(\tilde{X}_1^k - \hat{X}_1^{n,k})\|_1
\\ & \leq \| (\partial_k \tilde{g})(\tilde{X}_1)-(\partial_k \tilde{g})(\xi_n)) \|_2
\| \sqrt{n} (\tilde{X}_1^k - \hat{X}_1^{n,k}) \|_2
\end{align*}
Since $|\xi_n|\leq \max\{|\tilde{X}_1|, |\hat{X}_1^n|\}$ and $\partial_k \tilde{g}$ is polynomial growth, 
$((\partial_k \tilde{g})(\tilde{X}_1)-(\partial_k \tilde{g})(\xi_n))^2$ is uniformly integrable. 
Therefore, using $\xi_n \rightarrow^P \tilde{X}_1$, we have
\[
\| (\partial_k \tilde{g})(\tilde{X}_1)-(\partial_k \tilde{g})(\xi_n)) \|_2 \rightarrow 0.
\]
By (\ref{eq:EM_estimate}), we also have $\| \sqrt{n} (\tilde{X}_1^k - \hat{X}_1^{n,k}) \|_2 \leq C$. 
Consequently, we obtain
\[
\|((\partial_k \tilde{g})(\tilde{X}_1)-(\partial_k \tilde{g})(\xi_n))\sqrt{n}(\tilde{X}_1^k - \hat{X}_1^{n,k})\|_1 \rightarrow 0. 
\]
From this, we conclude that
\begin{align*}
C_n = \sum_k \tilde{E}[ (\partial_k \tilde{g})(\tilde{X}_1) \sqrt{n} (\tilde{X}_1^k - \hat{X}_1^{n,k}) |\F_1^Y] + o_P(1). 
\end{align*}

Set $\varphi_t = \tilde{X}_t - \hat{X}_t^n$. 
Let us consider the SDE 
\begin{align*}
\varphi_t^k = \sum_{k',m} \int_0^t a_{kk'}^m(s) \varphi_s^{k'} dZ_s^{m} + G_t^k, \quad 1 \leq k \leq e+1, 
\end{align*}
where $a_{kk'}^m(s) = \partial_{x_{k'}}A_k^m(X_t,Y_t)$ and 
\begin{align}\label{eq:remainder_terms_all}
G_t^k 
&= R_t^{k,n} + R_t^{D,k,n} + R_t^{B,k,n} + R_t^{Y,k,n}.
\end{align}
The error terms $R_t^{k,n}, R_t^{D,k,n}, R_t^{B,k,n}, R_t^{Y,k,n}$ are given as follows: 
\begin{align*}
&R_t^{k,n} = \sum_m \int_0^t 
( A_k^m(X_s,Y_s)-A_k^m(\bar{X}_s^n,Y_s) - \sum_{l} a_{k,l}^m(s) (X_s^l - \bar{X}_s^{n,l})
)dZ_s^m, 
\\
&R_t^{D,k,n} = \int_0^t (A_k^0(\bar{X}_s^n,Y_s)-A_k^0(\bar{X}_{\eta_n(s)}^n,Y_{\eta_n(s)})) ds, 
\\
&R_t^{B,k,n} = \sum_{1\leq m \leq e} \int_0^t (A_k^m(\bar{X}_s^n,Y_s)-A_k^m(\bar{X}_{\eta_n(s)}^n,Y_{\eta_n(s)})) dZ_s^m, 
\\
&R_t^{Y,k,n} = \sum_{e+1\leq m \leq e+d+1} \int_0^t (A_k^m(\bar{X}_s^n,Y_s)-A_k^m(\bar{X}_{\eta_n(s)}^n,Y_{\eta_n(s)})) dZ_s^m. 
\end{align*}
Applying Lemma \ref{lem:linearsol} to $\varphi_t = \tilde{X}_t - \hat{X}_t^n$, we have 
\begin{align*}
\varphi_1^k = 
\sum_{k',k''} \mathcal{E}_1^{kk'} \int_0^1 \big((\mathcal{E}_s^{-1})^{k'k''} dG_s^{k''} 
- \sum_{l,m} (\mathcal{E}_s^{-1})^{k'l}a_{lk''}^m(s) d\langle Z^m, G^{k''} \rangle_s \big).
\end{align*}
Hence it suffices to show the following three properties to prove our assertion in the theorem. 
\begin{description}
\item[(a)] For each $k,k',k''$, 
\begin{align*}
C_n^Y & := \sqrt{n} \tilde{E}\Big[ (\partial_k \tilde{g})(\tilde{X}_1) 
\mathcal{E}_1^{kk'} \int_0^1 (\mathcal{E}_s^{-1})^{k'k''} dR_s^{Y,k'',n}\Big|\F_1^Y\Big] 
\\ & = \sum_{i,j} 
\sqrt{n} \tilde{E}\Big[ 
(\partial_k \tilde{g})(\tilde{X}_1) \mathcal{E}_1^{kk'}
\int_0^1\int_{\eta_n(s)}^s (\mathcal{E}_r^{-1})^{k'k''} f_r^{ijk''} dY_r^j dY_s^i \Big|\F_1^Y\Big] + o_P(1).
\end{align*}

\item[(b)] For each $k,k',k''$, 
\begin{align*}
\sqrt{n} \tilde{E}\Big[ (\partial_k \tilde{g})(\tilde{X}_1) 
\mathcal{E}_1^{kk'} \int_0^1 (\mathcal{E}_s^{-1})^{k'k''} (dG_s^{k''}-dR_s^{Y,k'',n})\Big|\F_1^Y\Big] \rightarrow^P 0.
\end{align*}

\item[(c)] For each $k,k',k'',l,m$, 
\begin{align*}
\sqrt{n} \tilde{E}\Big[ (\partial_k \tilde{g})(\tilde{X}_1) 
\mathcal{E}_1^{kk'} \int_0^1 (\mathcal{E}_s^{-1})^{k'l}a_{lk''}^m(s) d\langle Z^m, G^{k''} \rangle_s \Big|\F_1^Y\Big] 
\rightarrow^P 0.
\end{align*}
\end{description}

{\bf Proof of Property (a)}: By using Lemma \ref{lem:basicineq2}, $C_n^Y$ is written by 
\begin{align*}
C_n^Y =  \sqrt{n} \tilde{E}\Big[ &(\partial_k \tilde{g})(\tilde{X}_1) 
\mathcal{E}_1^{kk'} \sum_{e+1 \leq m \leq e+d+1} \int_0^1 (\mathcal{E}_s^{-1})^{k'k''} \int_{\eta_n(s)}^s 
\\ & 
\big( \partial_x A_{k''}^m(X_r,Y_r)dX_r +  \partial_y A_{k''}^m(X_r,Y_r)dY_r \big)  dZ_s^m \Big|\F_1^Y\Big] 
+ o_P(1). 
\end{align*}
Since $s \mapsto (\mathcal{E}_s^{-1})^{k'k''}$ is continuous, we can now apply the argument (\ref{eq:diff_integ}) 
in Remark \ref{rem:important} to the term 
$(\mathcal{E}_s^{-1})^{k'k''}\int_{\eta_n(s)}^s$. Then we get 
\begin{align*}
C_n^Y = \sqrt{n} \tilde{E}\Big[ & (\partial_k \tilde{g})(\tilde{X}_1) 
\mathcal{E}_1^{kk'} \sum_{e+1 \leq m \leq e+d+1} \int_0^1 \int_{\eta_n(s)}^s (\mathcal{E}_r^{-1})^{k'k''} 
\\ & 
\big( \partial_x A_{k''}^m(X_r,Y_r)dX_r +  \partial_y A_{k''}^m(X_r,Y_r)dY_r \big)  dZ_s^m \Big|\F_1^Y\Big] 
+ o_P(1). 
\end{align*}
Applying Theorem \ref{thm:zerolimit2} to $A_{k''}^m(X_r,Y_r)dX_r$, we deduce 
\begin{align*}
C_n^Y = \sqrt{n} \tilde{E}\Big[ & (\partial_k \tilde{g})(\tilde{X}_1) 
\mathcal{E}_1^{kk'} \sum_{e+1 \leq m \leq e+d+1} \int_0^1 \int_{\eta_n(s)}^s (\mathcal{E}_r^{-1})^{k'k''} 
\\ & 
\sum_j \big( \sum_l \partial_{x_l} A_{k''}^m(X_r,Y_r)v_{lj}(X_r,Y_r) dY_r^j +  \partial_{y_j} A_{k''}^m(X_r,Y_r)dY_r^j \big)  dZ_s^m \Big|\F_1^Y\Big] 
+ o_P(1). 
\end{align*}
Hence we prove the validity of Property (a) by the definition of $A_{k''}^m$. 

{\bf Proof of Property (b)}: We begin with the estimate for $R_t^{k'',n}$. 
By the mean value theorem, there exists $(\xi_k^{l,n}(s))$ such that 
\[
A_k^0(X_s,Y_s) - A_k^0(\bar{X}_s^n,Y_s) = \sum_{l}\partial_{x_l}A_k(\xi_k^{l,n}(s),Y_s)(X_s^l-\bar{X}_s^{n,l})
\]
for every $k$. Then by H\"older's inequality, 
\begin{align*}
& \tilde{E}\Big[ \Big| \sqrt{n} (\partial_k \tilde{g})(\tilde{X}_1) 
\mathcal{E}_1^{kk'} \int_0^1 (\mathcal{E}_s^{-1})^{k'k''} dR_s^{k'',n}\Big|\Big]
\\ & \leq C_1 \tilde{E}\Big[ \Big| \sqrt{n}\int_0^1 (\mathcal{E}_s^{-1})^{k'k''} dR_s^{k'',n}\Big|^2\Big]^{1/2}
\\ & = C_1 \tilde{E}\Big[ \int_0^1 |\sqrt{n}(\mathcal{E}_s^{-1})^{k'k''} 
\sum_l (\partial_{x_l}A_{k''}^0(\xi_{k''}^{l,n}(t),Y_t) - \partial_{x_l}A_{k''}^0(X_t,Y_t)) 
(X_s^l-\bar{X}_s^{n,l}) |^2 ds \Big]^{1/2}
\\ & \leq C_2 \tilde{E}\Big[\sup_{0\leq t \leq 1}|\sqrt{n}(X_t-\bar{X}_t^n)|^6\Big]^{\frac{1}{6}} 
\sum_{l}\tilde{E}\Big[\sup_{0\leq t \leq 1}
|\partial_{x_l}A_{k''}^0(\xi_{k''}^{l,n}(t),Y_t) - \partial_{x_l}A_{k''}^0(X_t,Y_t)|^6\Big]^{\frac{1}{6}}.
\end{align*}
Thus we have, using (\ref{eq:EM_estimate}) and (\ref{eq:EM_conv_p}), 
\[
\sqrt{n} \tilde{E}\Big[ (\partial_k \tilde{g})(\tilde{X}_1) 
\mathcal{E}_1^{kk'} \int_0^1 (\mathcal{E}_s^{-1})^{k'k''} dR_s^{k'',n}\Big|\F_1^Y\Big] \rightarrow^P 0.
\]
We next prove the result for $R_t^{D,k'',n}$. 
By a similar calculus to the case of Property (a), 
\begin{align*}
& \sqrt{n} \tilde{E}\Big[ (\partial_k \tilde{g})(\tilde{X}_1) 
\mathcal{E}_1^{kk'} \int_0^1 (\mathcal{E}_s^{-1})^{k'k''} dR_s^{D,k'',n}\Big|\F_1^Y\Big] 
\\ &= \sqrt{n} \tilde{E}\Big[ (\partial_k \tilde{g})(\tilde{X}_1) 
\mathcal{E}_1^{kk'} \int_0^1 \int_{\eta_n(s)}^s (\mathcal{E}_r^{-1})^{k'k''} 
\big( \partial_x A_{k''}^m(X_r,Y_r)dX_r +  \partial_y A_{k''}^m(X_r,Y_r)dY_r \big) ds\Big|\F_1^Y\Big] + o_P(1). 
\end{align*}
According to Theorem \ref{thm:zerolimit2}, this yields 
\[
\sqrt{n} \tilde{E}\Big[ (\partial_k \tilde{g})(\tilde{X}_1) 
\mathcal{E}_1^{kk'} \int_0^1 (\mathcal{E}_s^{-1})^{k'k''} dR_s^{D,k'',n}\Big|\F_1^Y\Big] \rightarrow^P 0.
\]
For $dR_t^{B,k'',n}$, by the same discussion with the above, we can also show that 
\[
\sqrt{n} \tilde{E}\Big[ (\partial_k \tilde{g})(\tilde{X}_1) 
\mathcal{E}_1^{kk'} \int_0^1 (\mathcal{E}_s^{-1})^{k'k''} dR_s^{B,k'',n}\Big|\F_1^Y\Big] \rightarrow^P 0.
\]
Consequently we obtain Property (b). 

{\bf Proof of Property (c)}: 
Notice that $d\langle Z^m, G^{k''} \rangle_s$ can be expressed by 
the form $\sum_j \int_{\eta_n(s)}^s\theta_r^j dZ_r^jds$ plus some remainder term. 
Taking into account the continuity of $s \mapsto (\mathcal{E}_s^{-1})^{k'l}a_{lk''}^m(s)$, 
we can prove Property (c) as a consequence of Theorem \ref{thm:zerolimit2} 
in addition to the estimates for the remainder term as seen in the validity of Property (a), (b). 
\end{proof}

\end{document}